\documentclass[oneside,reqno,11pt]{amsart}
\usepackage{epsfig}
\usepackage{color}
\usepackage{soul}
\usepackage{amssymb,amsmath,amsthm,amstext,amsfonts}
\usepackage[normalem]{ulem}
\usepackage{amsmath,amscd,amstext,amsthm,amsfonts,latexsym}
\usepackage[colorlinks=true,linkcolor=blue, urlcolor=red, citecolor=blue,backref]{hyperref}	
\usepackage{amsmath,amssymb,amsthm}
\usepackage{color}
\usepackage[active]{srcltx}
\usepackage{amssymb, amsmath}
\usepackage{graphicx}
\usepackage{float}
\usepackage[active]{srcltx}

\usepackage[all,arc]{xy}
\usepackage{enumerate}
\usepackage{mathrsfs}

\newtheorem{theorem}{Theorem}[section]
\newtheorem{corollary}[theorem]{Corollary}
\newtheorem{proposition}[theorem]{Proposition}
\newtheorem{lemma}[theorem]{Lemma}

\theoremstyle{definition}
\newtheorem{maintheorem}{Theorem}

\newtheorem{maincorollary}[maintheorem]{Corollary}

\newtheorem{definition}[theorem]{Definition}

\newtheorem{example}[theorem]{Example}

\newtheorem{claim}{Claim}

\newtheorem{remark}[theorem]{Remark}

\newcommand{\RR}{\mathbb R}

\newcommand{\htop}{h_{\topp}}

\newcommand{\vep}{\varepsilon}

\newcommand{\Leb} {\text{Leb}}

\newcommand{\N}{\mathbb{N}}
\newcommand{\R}{\mathbb{R}}

\newcommand{\diam}{\operatorname{diam}}

\newcommand{\topp}{\operatorname{top}}

\newcommand{\hgeo}{h_{\text{geom}}}

\newcommand{\cF}{\mathcal{F}}

\newcommand{\cU}{\mathcal{U}}

\newcommand{\dM} {\text{dist}_M} 
\renewcommand{\le} {\leqslant} 
\renewcommand{\leq} {\leqslant} 
\renewcommand{\ge} {\geqslant} 
\renewcommand{\geq} {\geqslant}

\usepackage[normalem]{ulem}

\newcounter{main}

\makeatletter
\let\c@equation\c@theorem
\makeatother
\numberwithin{equation}{section}

\title{Variational principle for the Carathéodory--Pesin entropy of foliations}

\author[A. Bi\'s]{Andrzej Bi\'s}
\address{Andrzej Bi\'s, Faculty of Mathematics, {\L}\'od\'z University, Poland}
\email{andrzej.bis@wmii.uni.lodz.pl}

\author[M. Carvalho]{Maria Carvalho}
\address{Maria Carvalho, CMUP \& Departamento de Matem\'atica, Faculdade de Ci\^encias da Universidade do Porto, Rua do Campo Alegre 687, Porto, Portugal.}
\email{mpcarval@fc.up.pt}

\author[P. Varandas]{Paulo Varandas}
\address{Paulo Varandas, Center for Research and Development in Mathematics and Applications (CIDMA), Department of Mathematics, University of Aveiro, 3810--193 Aveiro, Portugal.}
\email{paulo.varandas@ua.pt}

\keywords{Carathéodory--Pesin entropy; geometric entropy of foliations; holonomy pseudogroups; transversal variational principles; local metric entropy; harmonic measures.}
\subjclass[2020]{Primary:
37B40, 
37C85, 
57R30 
Secondary:
 37A35, 
 37D35, 
 53C12. 
}

\date{\today}

\begin{document}

\begin{abstract}

We introduce Carathéodory–Pesin structures for non-singular foliations on Riemannian manifolds and define a new notion of entropy for such foliations. Under dynamical and geometric assumptions on the corresponding holonomy pseudogroups, we establish a variational description of this new concept which relates it to the local metric entropy of measures on transversals and to the entropy of harmonic measures on the manifold.





\end{abstract}

\maketitle


\section{Introduction}\label{se:introd}

The study of entropy for foliated manifolds lies at the intersection of several fields, such as dynamical systems, differential geometry, and geometric topology. A foliation decomposes the manifold into immersed submanifolds, called leaves, and its complexity is imparted by the way nearby leaves separate when followed along leafwise paths. This viewpoint inspired Ghys, Langevin and Walczak to define the notion of geometric entropy of a foliation (see \cite{GLW}).
This motivates the question, raised in \cite{GLW}, of whether the geometric entropy can be related by a variational principle to a suitable measure-theoretic entropy of probability measures on the ambient manifold. A natural class of probability measures on foliated manifolds is given by harmonic measures, introduced by Garnett \cite{Garnett}, which are stationary under leafwise heat diffusion and always exist on compact foliated manifolds. In foliated boxes, they admit a distinguished disintegration whose conditional measures along plaques (these are the connected local slices of the leaves of the foliation) are absolutely continuous with respect to the leafwise volume, and the densities are harmonic maps (cf. \cite{Candel, Garnett}). Thus, harmonic measures are well suited to the leafwise geometry of the foliation. 

\smallskip

The previous notion of entropy for a foliation is geometric in nature, although it can be expressed in terms of the topological entropy of holonomy pseudogroups acting on sections transversal to the foliation (cf. \cite{GLW, Walczak}). Passing from the ambient foliated manifold to transverse sections leads to the study of entropy for saturated sets, since every holonomy invariant set on transversals gives rise to a foliated invariant set. The purpose of this paper is to address this issue through Carath\'eodory--Pesin structures and transverse dynamics. Following the general dimension-theoretic formalism of Pesin \cite{Pe97}, we introduce a Carath\'eodory--Pesin entropy for saturated subsets of a foliated manifold (cf. Section~\ref{se:setting}). The construction of this concept is based on leafwise dynamical balls defined using the geometric distances described by Ghys, Langevin and Walczak. This covering formalism is flexible enough for us to handle saturated sets and, no less important, it can be compared with the corresponding Carath\'eodory--Pesin entropy of holonomy pseudogroups on transversals. 

\smallskip 
Our first result states that if $\mathcal F$ is a non-singular foliation of a closed compact Riemannian manifold $M$ admitting a global transversal, the Carathéodory--Pesin entropy of the entire foliation, denoted by $h_{CP}(M,\mathcal F)$, satisfies 
$$
h_{CP}(M,\mathcal F) \,=\,
\sup_{\mathcal U}\,
\frac{
h_{X_{\mathcal U}}(G_{\mathcal U},G_{\mathcal U,1})
}
{\operatorname{diam}(\mathcal U)}
$$
where the supremum is taken over all finite nice foliated atlases $\mathcal U$, $X_{\mathcal U}$ is an associated global transversal, and $(G_{\mathcal U},G_{\mathcal U,1})$ is the holonomy pseudogroup determined by $\mathcal U$. Combining this information with the variational principle for finitely generated pseudogroups established in \cite{BCV}, we obtain the transversal variational formula
$$
h_{CP}(M,\mathcal F) \,=\,
\sup_{\mathcal U}\,
\sup_{\nu\,\in\,\mathcal P(X_{\mathcal U})}\,
\frac{
\underline h_\nu(G_{\mathcal U},G_{\mathcal U,1})
}
{\operatorname{diam}(\mathcal U)}
$$
where $\mathcal P(X_{\mathcal U})$ stands for the space of Borel probability measures on the global transversal $X_{\mathcal U}$ and $\underline h_\nu(G_{\mathcal U},G_{\mathcal U,1})$ denotes the local metric entropy of $\nu$ with respect to the holonomy pseudogroup $G_{\mathcal U}$ and a fixed set of generators $G_{\mathcal U, 1}$. 
\smallskip

In order to relate this transversal formalism to harmonic measures on the manifold, and inspired by the local entropy introduced by Dinh, Nguyen and Sibony in the setting of hyperbolic Riemann surface laminations (cf. \cite{DNS}), we define the local entropy of a harmonic measure as the exponential decay of the measure of leafwise dynamical balls with the radius. Moreover, given a finite nice foliated atlas $\mathcal U$ and a harmonic measure $\mu$, this pointwise decay rate 
may be integrated on a complete transversal with respect to the transverse measure induced by $\mu$. For tame harmonic measures (that is, harmonic measures whose leafwise densities are uniformly bounded from above and from below in finite coverings by foliated boxes, we show that the measure of a leafwise dynamical ball can be compared with the measure of a holonomy dynamical ball on the transversal; and so, if $\mu$ is tame and $\nu_{\mathcal U}$ is its induced probability measure on $X_{\mathcal U}$, then
$$
h_\mu(M,\mathcal F) \,=\,
\sup_{\mathcal U}\,
\frac{
\underline h_{\nu_{\mathcal U}}
(G_{\mathcal U},G_{\mathcal U,1})
}
{\operatorname{diam}(\mathcal U)}.
$$
This equality identifies the entropy of tame harmonic measures with the local metric entropy of their induced transverse measures. Under an additional liftability assumption  
(that is, assuming that every probability measure on a global transversal can be induced by some tame harmonic measure on the foliated manifold), we establish the following variational principle
$$
h_{CP}(M,\mathcal F)
\,=\,
\sup
\left\{
h_\mu(M,\mathcal F) \colon 
\mu\in\mathcal P_H^{\mathrm{tame}}(M,\mathcal F)
\right\}
$$
where $P_H^{\mathrm{tame}}(M,\mathcal F)$ denotes the space of tame harmonic probability measures on $(M, \mathcal F)$. This liftability hypothesis is essential for the proof but is not valid in general: in Section~\ref{se:thmE}, we provide an example of a smooth foliation with a global transversal and a probability measure on this transversal that is not induced by any harmonic probability measure on the ambient manifold.

\smallskip
The paper is organized as follows. In Section~\ref{se:setting} we recall the definition of geometric entropy of a foliation, introduce the Carath\'eodory--Pesin entropy of saturated sets, and define the local entropy with respect to a foliation of any probability measure on the manifold.  Section~\ref{se:statements} presents the statements of our main results. In Section~\ref{sec:prelim} we gather essential information on finitely generated pseudogroups, holonomy pseudogroups and their Carath\'eodory--Pesin entropy. In Section~\ref{sec:proof-entropy-holonomy} we prove the variational principle relating the Carath\'eodory--Pesin entropy of the foliation to the corresponding transverse entropies. Section~\ref{sec:harmonicm} reviews harmonic measures and their disintegrations, just before we prove the non-liftability result. In Section~\ref{sec:measure-geometric-entropy-F} we show that the local entropy of a measure is independent of the chosen complete transversal. In Section~\ref{sec:proof-harmonic-measure-entropy} we establish the variational principle for the entropy of tame harmonic measures over the local metric entropy of the induced transverse measures. Finally, in Section~\ref{sec:examples} we discuss a few examples, including suspended foliations, foliations generated by flows, and holonomy invariant measures with zero entropy.
\section{Setting}\label{se:setting}

In this section, we introduce the notions of entropy of a foliation and entropy of a harmonic measure with respect to a foliation. In addition, we set the necessary notation.

\subsection{Geometric entropy of a foliation}
Let $M$ be a compact, closed Riemannian manifold and $\dM$ be the Riemannian distance on $M$. Given a $C^1$-foliation $\mathcal F$ and $r>0$, denote by $\mathcal F_x$ the leaf of the foliation $\mathcal F$ containing $x\in M$, by $B_{\mathcal F}(x,r)$ the ball centered at $x$ with radius $\vep$ contained in the leaf $\mathcal F_x$ with respect to the induced distance, and by $\Leb_x$ the volume measure on the leaf $\mathcal F_x$. 

\smallskip

By compactness of $M$, there exists a finite open nice covering $\mathcal U =\{U_1, U_2, \dots, U_k\}$ of $M$ by foliated charts (see Definition~\ref{def:foliated-atlas}), and there is $\vep_0>0$ such that, for every $i \in \{1, \dots, k\}$ and $x, y\in U_i$ such that $dM(x,y)<\vep_0$, the point $x$ can be projected orthogonally (in a unique way) to a point in the plaque $P_y \subset U_i$ containing the point $y$. In particular, for every $x,y\in U_i$ such that $\dM(x,y)<\vep_0$, and every leaf curve $\gamma \colon [0,1] \to \mathcal F_x \cap U_i$ such that $\gamma(0)=x$, there exists a uniquely defined orthogonal projection $\Pi_y\gamma \colon \,[0,1] \to \mathcal F_y$. In case $\dM(\gamma(1), \Pi_y\gamma(1))<\vep_0$, and changing the chart if necessary, one can continue to extend such paths until a time $t_0$ such that the distance between endpoints of the extended curves exceeds $\vep_0$ (see \cite[Subsection 3.3]{Walczak} for more information). 

\smallskip

In what follows, the diameter of a covering $\cU$ is denoted by $\diam (\cU)=\sup_{j} \diam U_j$, where 
$$\diam U_j=\sup_{P\,\in\, U_j}\, \sup_{x,\,y\,\in\, P}\, d_P(x,y)$$
and $d_P$ stands for the induced Riemannian metric on the plaque $P$. The length of a curve $\gamma$ will be abbreviated to $\ell(\gamma)$.

\smallskip

\begin{definition}\cite{GLW}\label{eq:Dmetric}
\emph{For each $r>0$, consider the metric
$$D_r(x,y)\,=\,\max \Big\{\delta_r(x,y), \,\, \delta_r(y,x),\,\,\dM(x,y)\,\Big\}$$
defined for all points $x,y\in M$ such that $dM(x,y)< \vep_0$, where
$$\delta_r(x,y)\, =\, \sup_\gamma \max_{t\,\in\, [0,\,r]} \,\,\dM(\gamma(t), \Pi_y(\gamma(t))$$
and the supremum is taken over all curves $\gamma$ inside $\mathcal F_x$ such that $\gamma(0)=x$ and $\ell(\gamma) \leq r$.}
\end{definition}
 
\smallskip

The family of distances $\delta_r$ yields a natural way to define separation of points in $M$.

\begin{definition}
\emph{Given $r>0$ and $0<\vep<\vep_0$, two points $x,y\in M$ are \emph{$(\mathcal F,r,\vep)$-separated} if one of the following conditions holds:
\begin{itemize}
\item[$(a)$] $\dM(x,y)>\vep_0$.
\item[$(b)$] There exists a curve $\gamma \colon [0,1] \to \mathcal F_x$ such that $\gamma(0)=x$, $\ell(\gamma)\le r$ and 
$$\dM(\gamma(1),\,\Pi_y(\gamma(1)))\,>\,\vep.$$
\item[$(c)$] There exists a curve $\gamma \colon [0,1] \to \mathcal F_y$ such that $\gamma(0)=y$, $\ell(\gamma)\le r$ and 
$$\dM(\gamma(1),\,\Pi_x(\gamma(1)))\,>\,\vep.$$
\end{itemize}}

\emph{A subset $E\subset X$ is \emph{$(\mathcal F,r,\vep)$-separated} if any two points in $E$ are $(\mathcal F,r,\vep)$-separated. We denote by 
$s(\mathcal F, r,\vep)$ the largest cardinality of $(\mathcal F,r,\vep)$-separated sets in $M$.}
\end{definition}

The complexity of a foliation of $M$ will be quantified precisely by the exponential rate of separation of points of $M$ along the leaves of the foliation.

\begin{definition}\cite{GLW}\label{def:geomPressure}
\emph{The \emph{geometric entropy of the foliation $\mathcal F$} in $M$ is the limit 
\begin{equation}\label{eq:geomH}
\hgeo(M,\mathcal F) \, =\, \lim_{\vep\,\to\,0^+}\,\limsup_{r\,\to\, +\infty}\, \frac1r \log s(\mathcal F,r,\vep).
\end{equation}}
\end{definition}

The geometric entropy of a foliation is a purely geometric concept, though it can be expressed in terms of the topological entropy of the holonomy pseudogroups on transversal directions (cf. \cite[Th\'eor\`eme 3.4]{GLW}). 
\medskip


\subsection{Carath\'eodory-Pesin entropy of a foliation}
Let us introduce another notion of entropy of a foliation using Carath\'eodory-Pesin structures for $\mathcal F$-saturated sets. A subset $Z\subset M$ is said to be $\mathcal F$-saturated if it is a union of leaves. Given such a set $Z$, $r>0$ and $\varepsilon>0$, denote by
$$B_r(x,\varepsilon) \,=\,\{y\in M \colon \, D_r(x,y)\,<\,\varepsilon\}$$
the \emph{dynamical ball} with respect to the distance $D_r$ (see Definition~\ref{eq:Dmetric}). 
\smallskip

Fix $r_0>0$ and $\varepsilon>0$. We denote by
$C_Z(r_0,\varepsilon)$ the family of all finite or countable sets
$I_{r_0}(\varepsilon)\subset Z\times [r_0,+\infty)$ 
such that
$$Z \,\subset\,
\bigcup_{(x_j,\,r_j)\,\in\, I_{r_0}(\varepsilon)}\,\,
B_{r_j}(x_j,\varepsilon).$$ 
For notational simplicity, we shall write $j\in I_{r_0}(\varepsilon)$
to identify the pair $(x_j,r_j)\in I_{r_0}(\varepsilon)$.
For $s\geq0$, define
\begin{equation*}\label{eq:MZh}
M_Z(s,\varepsilon,r_0)
=
\inf
\left\{
\sum_{j\,\in\, I_{r_0}(\varepsilon)}
e^{-s r_j}
\colon \,
I_{r_0}(\varepsilon)\in C_Z(r_0,\varepsilon)
\right\}.
\end{equation*}
The limit
$M_Z(s,\varepsilon)
=
\lim_{r_0\to+\infty}
M_Z(s,\varepsilon,r_0)$ 
exists because the function $r_o \in \mathbb R_+ \mapsto  M_Z(s,\varepsilon,r_0)$ is increasing. Furthermore,  
$s \geq 0 \mapsto M_Z(s,\varepsilon)$ 
is decreasing and, by the Carath\'eodory-Pesin construction, there exists a critical value $h_Z(\mathcal F,\varepsilon) \ge 0
$ 
such that
$$M_Z(s,\varepsilon) \,=\,
\begin{cases}
    \begin{array}{cl}
        +\infty & \quad \text{if }s<h_Z(\mathcal F,\varepsilon) \\
         0 & \quad \text{if }s>h_Z(\mathcal F,\varepsilon)
    \end{array}
\end{cases}.
$$
In other words,
\begin{equation*}\label{eq:formdim}
h_Z(\mathcal F,\varepsilon)
=
\sup\Big\{s\geq0 \colon \,\ M_Z(s,\varepsilon)=+\infty\Big\}
=
\inf\Big\{s\geq0 \colon \,\ M_Z(s,\varepsilon)=0\Big\}.
\end{equation*}
We note that the map $\vep > 0 \mapsto h_Z(\mathcal F,\varepsilon)$ is monotonic.
\smallskip

\begin{definition}
\emph{The \emph{geometric Carath\'eodory-Pesin entropy of the foliation $(M,\mathcal F)$ restricted to $Z$} is 
defined by
$$h_{\mathrm{CP}}(Z,\mathcal F) \,=\,
\lim_{\varepsilon\,\to\,0^+}\, h_Z(\mathcal F,\varepsilon).$$} 
\end{definition}

\subsection{Tame harmonic measures and tame foliated manifolds}

Let $\mathcal P(M)$ denote the space of Borel probability measures on $M$. 

\begin{definition}\cite{Garnett}\label{def:harmonic} \emph{A Borel probability measure $m\in \mathcal P(M)$ is called \emph{harmonic} if 
$$\int_M \,\Delta_{\mathcal F} f(x) \; dm(x)\,=\,0$$
for every continuous map $f\colon M \to \mathbb{R}$ that is $C^2$ on the leaves, 
where $\Delta_{\mathcal F}$ denotes the leaf Laplacian operator. In what follows, $\mathcal P_{H}(M,\mathcal F)$ stands for the space of harmonic measures of the foliated manifold $(M,\mathcal F)$.}
\end{definition}

It is known that any foliation of a compact manifold admits a finite nice foliated atlas, hence a nice covering, and that any nice covering determines a finitely generated holonomy pseudogroup acting on a  complete  transversal (see Subsection~\ref{sec:foliationsandhpseudo} for more details). The role in this setting of harmonic measures is due to the fact that they admit absolutely continuous disintegrations along plaques of the foliation, in the following sense: if $m$ is a harmonic then, for every nice foliated atlas $\cU$ with a complete   transversal $X_\cU$, one has
\begin{equation}\label{eq:harmU1}
m \mid_\cU  \,\,=\,\int_{X_\cU} \,m_x \,\, d\nu(x)
\end{equation}
where $\nu$ is a Borel probability measure on $X_\cU$ and $m_x$ is absolutely continuous with respect to the Lebesgue measure on the plaque that contains $x$. We refer the reader to Proposition~\ref{prop:disintegrationharm} for a precise statement.

\begin{definition}\label{def:tame}
\emph{A harmonic measure $m\in \mathcal P(M)$ is said to be \emph{tame} if, for $\nu$-almost every $x \in X_\mathcal U$, the (Radon-Nikodym) density of $\mu_x$ with respect to the Lebesgue measure on the plaque that contains $x$ is uniformly bounded from above and from below. We denote by $\mathcal P^{\text{tame}}_H(M,\mathcal F)$ the class of tame harmonic measures on $M$.} 
\end{definition}

\smallskip

Tame harmonic measures suggest the following class of foliated manifolds.

\begin{definition}\label{def:tamefoliation}
\emph{Given a Riemannian manifold $M$ and a $C^1$ non-singular foliation $\mathcal F$ of $M$, we say that the pair $(M,\mathcal F)$ is a \emph{tame foliated space} if, for every nice foliated atlas $\cU$ of $M$, every  complete  transversal $X_\cU$ and every Borel probability measure $\nu \in \mathcal P(X_\cU)$, there exists a tame harmonic measure $\mu\in \mathcal P_H(M,\mathcal F)$ such that \eqref{eq:harmU1} holds.}
\end{definition}

\smallskip
The class of tame foliated spaces includes suspended foliations as a special example (cf. Lemma~\ref{le:lift-suspension}). As far as we know, it is not clear whether all foliated manifolds are tame.

\smallskip

\subsection{Geometric entropy of a harmonic measure}

Let us now recall the notion of geometric entropy of $\mu \in \mathcal P(M)$ as introduced by Dinh, Nguyen and Sibony in the context of hyperbolic foliations (cf. \cite{DNS}). Consider the family of maps $\delta_r$ and metrics $D_r$ introduced in 
Definition~\ref{eq:Dmetric} and, given $r,\vep>0$ and $x\in M$, set 
\begin{equation*}\label{def:dynball-delta}
B_{r}(x,\vep) \,=\,\{y\in M \colon D_r(x,y) < \vep\}.
\end{equation*}
The \emph{local entropy of $\mu\in \mathcal P(M)$ at $x\in M$} is defined by
\begin{equation}\label{def:localFmeasureentropy}
h_\mu(M,\mathcal F,x)\,=\, \lim_{\vep\,\to\,0^+}\,\liminf_{r\,\to\,+\infty} \,-\frac1r \log \mu(B_r(x,\vep)).
\end{equation}
(Actually, Dinh, Nguyen and Sibony use $\limsup$ instead of $\liminf$ in   ~\eqref{def:localFmeasureentropy}.)
\smallskip

 

Given a nice foliated atlas $\mathcal U=(U_i)_{1\,\le\, i\, \le\, k}$ with local transversals $(X_{U_i})_{1\,\le\, i\, \le\, k}$ and a complete transversal 
$$X_\cU =\bigsqcup_{1\,\le\, i\, \le\, k}\, X_{U_i}$$
\emph{the local entropy of $\mu$ with respect to the atlas $\mathcal U$} is defined by
\begin{equation}\label{eq:defhmuU}
h_\mu(M,\cF,\cU)\,=\,\int_{X_\cU} \,h_\mu(M,\mathcal F,x)\,\, d\nu_{X_\cU}(x)
\end{equation} 
where $\nu_{X_\cU}$ is the probability induced by $\mu$ on $X_\cU$ (cf. \eqref{eq:harmU1}). We note that $h_\mu(M,\cF,\cU)$ does not depend on the transversal $X_\cU$ (cf. Lemma~\ref{le:changetransv}).
\smallskip

\begin{definition}\cite{DNS}\label{def:geomentr}
\emph{The \emph{geometric entropy of $\mu\in \mathcal P(M)$ with respect to the foliation $\mathcal F$ of $M$} is defined by 
\begin{equation}\label{def:geometricmeasureentropy}
h_\mu(M,\cF) \,=\, \sup_{\mathcal U} \; \big\{h_\mu(M,\cF,\mathcal U) \big\}
\end{equation}
where the supremum is taken over all nice coverings $\mathcal U$ of $M$.}
\end{definition}


\section{Main results}\label{se:statements}

This work provides a variational description of the Carathéodory--Pesin entropy of foliations of arbitrary dimension on compact, closed Riemannian manifolds. More precisely, we relate the Carathéodory--Pesin entropy of the foliation to the Carathéodory--Pesin entropy of the holonomy pseudogroups induced by finite nice foliated atlases. Combining this comparison with the variational principle for finitely generated pseudogroups obtained in \cite{BCV}, we establish a variational principle expressed in terms of probability measures on  complete  transversals. 
In general, it is not a variational principle over harmonic measures. 
\smallskip

Given a finite nice foliated atlas $\mathcal U$, denote by $X_{\mathcal U}$ the  corresponding complete  transversal and by $(G_{\mathcal U},G_{\mathcal U,1})$ the holonomy pseudogroup determined by $\mathcal U$ with the generator set $G_{\mathcal U,1}$. In what follows, $h_{X_{\mathcal U}}(G_{\mathcal U},G_{\mathcal U,1})$ stands for the Carath\'eodory-Pesin entropy of this pseudogroup on $X_{\mathcal U}$ and $\underline{h}_{\nu}(G_{\mathcal U},G_{\mathcal U,1})$ is the  local metric entropy of $\nu \in \mathcal{P}(X_{\mathcal U})$ 
(cf. Subsections~\ref{subsec:entropyPG} and \ref{sec:foliationsandhpseudo} for precise definitions).


\begin{maintheorem}
\label{thm:entropy-holonomy}
\emph{Let $\mathcal F$ be a $C^1$ non-singular foliation of a compact, closed Riemannian manifold $M$ with a global transversal and $Z\subset M$ be an $\mathcal F$-saturated set. For every finite nice foliated atlas $\mathcal U$, define $Z_{\mathcal U}=Z\cap X_{\mathcal U}$,  
where $X_{\mathcal U}$ the corresponding complete transversal. Then
$$h_{\mathrm{CP}}(Z,\mathcal F)
=
\sup_{\mathcal U}
\left\{
\frac{
h_{Z_{\mathcal U}}(G_{\mathcal U},G_{\mathcal U,1})
}
{\operatorname{diam}(\mathcal U)}
\right\}.
$$
If, in addition, $Z$ is Borel, then
$$h_{\mathrm{CP}}(Z,\mathcal F)
\,=\,
\sup_{\mathcal U}
\sup_{\substack{
\nu\,\in\,\mathcal P(X_{\mathcal U})\\
\nu(Z_{\mathcal U})\,=\,1
}}
\left\{
\frac{
\underline h_\nu(G_{\mathcal U},G_{\mathcal U,1})
}
{\operatorname{diam}(\mathcal U)}
\right\}.
$$
In particular,
\begin{equation}\label{eq:CP}
h_{\mathrm{CP}}(M,\mathcal F) \,\, = \,\,\sup_{\mathcal U} \,\left\{\frac{h_{X_{\mathcal U}}(G_{\mathcal U},G_{\mathcal U,1})}{\operatorname{diam}(\mathcal U)}\right\} \,\,=\,\,\sup_{\mathcal U} \,\sup_{\nu\, \in\, \mathcal{P}(X_{\mathcal U})} \,\left\{
\frac{\underline{h}_{\nu}(G_{\mathcal U},G_{\mathcal U,1})}{\operatorname{diam}(\mathcal U)}\right\} 
\end{equation}
where the first supremum is taken over all finite nice foliated atlases $\mathcal U$ of $M$ and the second is taken over all Borel probability measures on $X_{\mathcal U}$.}
\end{maintheorem}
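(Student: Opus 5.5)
The plan is to compare leafwise dynamical balls $B_r(x,\vep)$ in $M$ with holonomy dynamical balls on the complete transversal $X_{\mathcal U}$, and then transfer Carathéodory--Pesin sums between the two settings. Once the first equality (for $h_{\mathrm{CP}}(Z,\mathcal F)$ in terms of $h_{Z_{\mathcal U}}(G_{\mathcal U},G_{\mathcal U,1})$) is established, the second follows by applying, for each fixed $\mathcal U$, the variational principle for finitely generated pseudogroups on Borel sets from \cite{BCV}. That principle gives $h_{Z_{\mathcal U}}(G_{\mathcal U},G_{\mathcal U,1})=\sup\{\underline h_\nu(G_{\mathcal U},G_{\mathcal U,1})\colon \nu(Z_{\mathcal U})=1\}$. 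Here $Z_{\mathcal U}$ is Borel because $Z$ is Borel. Taking $Z=M$, and using that $M$ is saturated with $Z_{\mathcal U}=X_{\mathcal U}$, gives \eqref{eq:CP}.

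\textbf{Step 1 (geometric comparison).} Fix a finite nice atlas $\mathcal U$ with $d=\operatorname{diam}(\mathcal U)$. A leaf curve of length $\le r$ starting at $x$ crosses at most about $r/\delta+1$ plaques, where $\delta>0$ is a Lebesgue-number-type lower bound for the leafwise length needed to traverse a plaque. Conversely, any holonomy word of length $n$ in $G_{\mathcal U,1}$ is realised by a leaf path of length at most $nd$. Using uniform continuity of orthogonal projection in the charts, I would prove two nestings for $x\in X_{\mathcal U}$ and suitable $\vep'(\vep)\to0$ as $\vep\to0$:
$$B^{G}_{\lceil r/d\rceil}(x,\vep')\cap X_{\mathcal U}\subset B_r(x,\vep)\cap X_{\mathcal U},\qquad B_r(x,\vep)\cap X_{\mathcal U}\subset B^{G}_{\lfloor r/d\rfloor-c}(x,\vep'').$$
Here $B^G_n$ is the holonomy dynamical ball. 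The second inclusion is where $\operatorname{diam}(\mathcal U)$ enters exactly, and also the place where one has to optimize. Concretely, I expect an inequality of the form $h_{Z_{\mathcal U}}/d\le h_{\mathrm{CP}}(Z,\mathcal F)\le h_{Z_{\mathcal U}}/\delta_{\mathcal U}$, with $\delta_{\mathcal U}\le d$. Then one shows that $\delta_{\mathcal U}/d$ can be made arbitrarily close to $1$ by refining to atlases with plaques of almost uniform size, and that refinement does not decrease $h/\operatorname{diam}$.

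\textbf{Step 2 (transfer of covers).} Saturation of $Z$ lets one pass between covers of $Z$ and covers of $Z_{\mathcal U}$. A cover of $Z$ by balls $B_{r_j}(x_j,\vep)$ intersects $X_{\mathcal U}$ in sets contained in holonomy balls of depth $n_j\approx r_j/d$, after moving centres into $X_{\mathcal U}$ along short plaque paths at bounded cost. The sum $\sum e^{-sr_j}$ then compares with $\sum e^{-(sd)n_j}$. Conversely, a cover of $Z_{\mathcal U}$ by holonomy balls saturates to a cover of $Z$, since every point of $Z$ lies on a plaque through a point of $Z_{\mathcal U}$ within distance $d$, and leaf balls of depth $n_jd$ absorb this shift. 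Both directions give $M_Z(s,\vep)\asymp M^G_{Z_{\mathcal U}}(sd,\vep')$ up to critical exponents. Letting $\vep\to0$ and taking the supremum over $\mathcal U$ yields the first equality.

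\textbf{Main obstacle.} The hard point is the upper bound $h_{\mathrm{CP}}(Z,\mathcal F)\le\sup_{\mathcal U}h_{Z_{\mathcal U}}/\operatorname{diam}(\mathcal U)$. Leaf paths of length $r$ may use many more than $r/d$ plaque changes, so the holonomy depth is governed by the minimal plaque crossing length rather than by the diameter. The approach I would try first is to use the global transversal hypothesis to build, for each $\eta>0$, atlases whose plaques are nearly round geodesic balls of radius $\rho$. For these, the crossing length is at least $(1-\eta)\cdot 2\rho\approx(1-\eta)\operatorname{diam}$. One then checks that the orthogonal-projection distances $\delta_r$ are controlled by holonomy displacements of words of length $r/((1-\eta)\operatorname{diam})$.
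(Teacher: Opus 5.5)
Your strategy is correct, and it departs from the paper in the upper bound. That departure is needed, because the paper's own argument for the upper bound does not work for a general atlas.

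\textbf{Where you agree with the paper.} The lower bound is handled as in the paper. A cover of $Z$ by foliated balls $B_{r_j}(x_j,\varepsilon)$ becomes a cover of $Z_{\mathcal U}$ by holonomy balls of depth about $r_j/\operatorname{diam}(\mathcal U)$. The paper re-centres more simply than you do, by choosing $z_j\in B_{r_j}(x_j,\varepsilon)\cap Z_{\mathcal U}$ and doubling $\varepsilon$. The passage to the measure formulas via \cite[Theorem~B]{BCV} and $Z=M$ is also the same.

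\textbf{Where the routes part.} For the upper bound the paper fixes an arbitrary atlas. In Claim~1 it asserts that a holonomy ball $B^{G_{\mathcal U}}_{n}(z,\varepsilon)$, together with the plaques through it, lies in a foliated ball of depth $(n+C_0)\operatorname{diam}(\mathcal U)$. Its argument uses $h_{Z_{\mathcal U}}(G_{\mathcal U},G_{\mathcal U,1})<s\operatorname{diam}(\mathcal U)$ for that single atlas only. It would therefore give $h_{\mathrm{CP}}(Z,\mathcal F)=h_{Z_{\mathcal U}}(G_{\mathcal U},G_{\mathcal U,1})/\operatorname{diam}(\mathcal U)$ for every $\mathcal U$, which is false.

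Your worry about plaque changes is exactly why. Take the orbit foliation of the suspension in Example~\ref{ex:suspensionAnosov} with roof $1$, covered by boxes of flow-length $2/k$ overlapping by halves. Each generator advances flow time only $1/k=\operatorname{diam}(\mathcal U)/2$, so the ratio for this atlas is half of $h_{\mathrm{CP}}(M,\mathcal F)$.

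What does hold for a general atlas is your bound $h_{\mathrm{CP}}(Z,\mathcal F)\le h_{Z_{\mathcal U}}(G_{\mathcal U},G_{\mathcal U,1})/\delta_{\mathcal U}$. The supremum formula then needs atlases with $\delta_{\mathcal U}/\operatorname{diam}(\mathcal U)\to1$. This is the device of \cite[Lemme~3.11]{GLW}, which the paper uses only in Lemma~\ref{le:ball-comparison2} for Theorem~\ref{thm:harmonic-measure-entropy}. Your route costs a construction but proves the upper bound; the paper's is shorter but rests on an inclusion that fails for general atlases.

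\textbf{What you need to repair.}
\begin{enumerate}
\item Your first displayed nesting and the converse half of Step~2 still use depth $\lceil r/d\rceil$ and $n_jd$. That is the very inclusion you later reject. Both must be stated with $\delta_{\mathcal U}$. It is that nesting, not the second one, that needs optimising.
\item Define $\delta_{\mathcal U}$ as the largest number such that every leaf path of length less than $\delta_{\mathcal U}$ lies in a single plaque. Then a path of length $r$ is tracked by at most $\lceil r/\delta_{\mathcal U}\rceil+1$ plaques.
\item Round plaques, or plaques of uniform size, do not by themselves make $\delta_{\mathcal U}$ close to $\operatorname{diam}(\mathcal U)$. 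A path can cut a round plaque along a short chord, and equal intervals overlapping by halves have $\delta_{\mathcal U}=\operatorname{diam}(\mathcal U)/2$.
\item What works is to take plaques that are approximately leafwise balls of radius $\rho$, with centres forming an $\eta\rho$-net in every leaf. Heavy overlaps are allowed, and the atlas stays finite by compactness. A path of length $\ell$ lies in the leafwise ball of radius $\ell/2$ about its midpoint. This gives $\delta_{\mathcal U}\ge2(1-\eta)\rho$, while $\operatorname{diam}(\mathcal U)\le2(1+\eta)\rho$.
\item The global transversal plays no role in this construction.
\item Drop the claim that refinement does not decrease $h/\operatorname{diam}$. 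It is not needed, since only the existence of such atlases matters. It also fails in general: refining the net atlas by half-overlapping pieces halves the ratio.
\item Since $D_r\ge d_M$, a foliated ball $B_r(z,\varepsilon)$ lies in the $\varepsilon$-neighbourhood of $z$. So the shift from $y\in Z_{\mathcal U}$ to a point $x$ of its plaque is not absorbed by the depth. Replace each centre $z_j$ by an $\varepsilon$-net of points of its plaque; these lie in $Z$ by saturation. This multiplies the sums by a constant $K(\varepsilon)$ and leaves the critical exponent unchanged. The paper's Claim~1 overlooks this point too.
\end{enumerate}
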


The first equality in \eqref{eq:CP} provides a version of \cite[Th\'eor\`eme 3.4]{GLW} within the framework of Carath\'eodory-Pesin entropy, while the second is a variational principle that connects the topological complexity of the foliation (measured by $h_{\mathrm{CP}}(M,\mathcal F)$) with the  local metric entropy of Borel probability measures on global transversals. The second equality is a direct consequence of the first together with the variational principle for pseudogroups established in \cite{BCV} (see Theorem~\ref{thm:variationalpseudogroups}).
Observe also that, by \cite[Th\'eor\`eme~3.4]{GLW} and \cite[Theorem~B]{BCV}, one has \begin{eqnarray*}
h_{geom}(M, \mathcal F) &=& \sup_{\mathcal U} \,\left\{\frac{h_{top}(G_{\mathcal U},G_{\mathcal U,1})}{\operatorname{diam}(\mathcal U)}\right\} \\
&&\smallskip \\
h_{top}(G_{\mathcal U},G_{\mathcal U,1}) &\geq& \sup_{\nu\, \in\, \mathcal{P}(X_{\mathcal U})} \,\big\{\,
\underline{h}_{\nu}(G_{\mathcal U},G_{\mathcal U,1})\,\big\} \quad \forall\, \mathcal U,
\end{eqnarray*}
and so
$$h_{geom}(M, \mathcal F) \,\geq\, h_{\mathrm{CP}}(M,\mathcal F).$$
\smallskip

In \cite{GLW}, Ghys, Langevin and Walczak asked whether the geometric entropy of a foliation can be related to harmonic probability measures on the ambient manifold by using a suitable notion of measure-theoretic entropy. The
variational principle in Theorem~\ref{thm:entropy-holonomy} does not answer this question since it concerns the dynamics on transversals and arbitrary transversal probability measures cannot, in general, be lifted to harmonic probability measures on the foliated manifold, even for smooth minimal foliations
(cf. Theorem~\ref{thm-non-liftable}).
\smallskip

A comment is due regarding the assumption that the foliation has a global transversal. In the proof of Theorem~\ref{thm:entropy-holonomy} we use \cite[Lemma~3.7]{GLW}, which concerns $(n,\vep)$-separated sets or, equivalently, \emph{balls} with respect to the map
$$\tau_n(x,y) \, =\, \max\big\{d(g(x), g(y)\colon \, g \in G_n\big\}$$
which estimates the separation of points by the holonomy pseudogroup (see the precise definitions in Section~\ref{sec:prelim}) but may not be a metric. However, we also use \cite[Theorem~B]{BCV}, which requires a true dynamical metric $d_n$, like the one defined in \cite[Section~2]{BCV}. The existence of a global transversal implies that the holonomy pseudogroup is a group, so $\tau_n = d_n$ for every $n \in \N$, thereby preventing these incompatibilities.
\smallskip

The next result is a measure-theoretic version of Theorem~\ref{thm:entropy-holonomy} for harmonic measures.

\begin{maintheorem}
\label{thm:harmonic-measure-entropy}
\emph{Let $\mathcal F$ be a $C^1$ non-singular foliation of a compact Riemannian manifold $M$ and $\mu \in \mathcal P^{\text{tame}}_H(M,\mathcal F)$ be a tame harmonic measure. Given a finite nice foliated atlas $\mathcal U$, denote by $\nu_{\mathcal U}$ the probability measure induced by $\mu$ on a  global transversal   $X_{\mathcal U}$. Then
$$h_\mu(M, \mathcal F) \,=\,\sup_{\mathcal U}\,\left\{
\frac{\underline{h}_{\nu_{\mathcal U}}(G_{\mathcal U},G_{\mathcal U,1})}{\operatorname{diam}(\mathcal U)}\right\} \,=\, \lim_{\diam \mathcal U \,\to \,0^+}\,\left\{\frac{\underline{h}_{\nu_{\mathcal U}}(G_{\mathcal U},G_{\mathcal U,1})}{\operatorname{diam}(\mathcal U)}\right\}.$$
In particular,  $h_\mu(M, \mathcal F) \, \leq \, h_{\mathrm{CP}}(M, \mathcal F)$  for every $\mu \in \mathcal P^{\text{tame}}_H(M,\mathcal F)$.}
\end{maintheorem}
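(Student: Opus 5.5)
The plan is to compare, for a fixed finite nice foliated atlas $\mathcal U$, the $\mu$-measure of a leafwise dynamical ball $B_r(x,\vep)$ with the $\nu_{\mathcal U}$-measure of a holonomy dynamical ball on $X_{\mathcal U}$. We then integrate the resulting pointwise rates and take the supremum over atlases.

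\textbf{Step 1 (balls versus holonomy balls).} Fix $\mathcal U$ with $\diam(\mathcal U)=\delta$, and let $x\in X_{\mathcal U}$. Leafwise curves of length $\le r$ starting at $x$ cross at most about $r/\delta$ plaques, up to bounded error. Conversely, every holonomy word of length $n$ is realised by a leaf curve of length comparable to $n\delta$. Following the argument behind \cite[Lemma~3.7]{GLW} (used in the proof of Theorem~\ref{thm:entropy-holonomy}), I would show that there are constants $c,C$ and $\vep'\le\vep\le\vep''$, tending to $0$ together, such that
\[
\Pi^{-1}\bigl(B^{G}_{n_+(r)}(x,\vep')\bigr)\cap U \;\subset\; B_r(x,\vep)\;\subset\; \Pi^{-1}\bigl(B^{G}_{n_-(r)}(x,\vep'')\bigr)\cap U .
\]
Here $\Pi$ is the plaque projection onto the transversal, $B^G_n$ is the $d_n$-ball of the pseudogroup, and $n_\pm(r)=r/\delta\,(1+o(1))$ up to an additive error that depends on how the lengths of plaques compare with $\delta$. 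The ratio $r/n$ is pinned to $\diam(\mathcal U)$ only in the limit of fine atlases. This is why the statement includes the limit as $\diam\mathcal U\to0^+$, and I expect the supremum over $\mathcal U$ to coincide with that limit via a refinement/monotonicity argument.

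\textbf{Step 2 (tameness).} By Proposition~\ref{prop:disintegrationharm} and the tameness bounds $a\le dm_y/d\mathrm{Leb}\le b$, the measure $\mu$ of a saturated-in-plaques set $\Pi^{-1}(A)\cap U$ is comparable to $\nu_{\mathcal U}(A)$. The constants depend on $a$, $b$ and the plaque volumes, and they are uniform in $r$. The leafwise part of $B_r(x,\vep)$ has width at most $\vep$, which only contributes a factor independent of $r$. Hence
\[
\liminf_{r\to\infty}-\frac1r\log\mu(B_r(x,\vep))
\]
is squeezed between $\frac{1}{\delta}(1\pm o(1))\liminf_n -\frac1n\log\nu_{\mathcal U}(B^G_n(x,\vep^{\pm}))$. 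Letting $\vep\to0$ relates $h_\mu(M,\mathcal F,x)$ to the local entropy of $\nu_{\mathcal U}$ at $x$ divided by $\delta$. Integrating against $\nu_{\mathcal U}$ gives $h_\mu(M,\mathcal F,\mathcal U)$ compared with $\underline h_{\nu_{\mathcal U}}(G_{\mathcal U},G_{\mathcal U,1})/\diam(\mathcal U)$, using the definition of $\underline h_\nu$ as an integral of local rates and Lemma~\ref{le:changetransv}. Fatou's lemma or monotone convergence handles the interchange of $\lim_{\vep}$ and the integral.

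\textbf{Step 3 (conclusion).} Taking the supremum over $\mathcal U$ and using that the error factors $1\pm o(1)$ vanish as $\diam\mathcal U\to0$ gives both equalities. The final inequality $h_\mu\le h_{\mathrm{CP}}$ follows from Theorem~\ref{thm:entropy-holonomy}, since $\underline h_{\nu_{\mathcal U}}\le\sup_\nu \underline h_\nu$.

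The main obstacle is Step 1. One must make the correspondence between leaf length $r$ and word length $n$ sharp enough that the ratio tends exactly to $\diam(\mathcal U)$. One must also control the orthogonal projections $\Pi_y$ across chart changes so that transversal $d_n$-closeness and $D_r$-closeness match with comparable $\vep$. I would handle the discrepancy between plaque diameters and $\delta$ by passing to atlases with all plaque diameters in $[\delta(1-\eta),\delta]$, and then letting $\eta\to0$.
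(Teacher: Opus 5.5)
Your proposal follows essentially the same route as the paper. You compare $B_r(x,\vep)$, via a GLW-type estimate (Lemme~3.7), with plaque-local $\vep$-thickenings of holonomy balls $B^{G_{\mathcal U}}_n(x,\cdot)$ with $n\approx r/\diam(\mathcal U)$; you use tameness to get $\mu(B_r(x,\vep))\asymp \vep^{\dim\mathcal F}\,\nu_{\mathcal U}(B^{G_{\mathcal U}}_n(x,\cdot))$ with constants independent of $r$; you integrate pointwise against $\nu_{\mathcal U}$; and for the upper bound you refine to an atlas with (nearly) equal plaque diameters with a factor $(1-\eta)$. The paper organizes this a bit more sharply: it proves the lower bound $h_\mu(M,\mathcal F,\mathcal U)\geq \underline h_{\nu_{\mathcal U}}(G_{\mathcal U},G_{\mathcal U,1})/\diam(\mathcal U)$ for every atlas, with $n=\lfloor r/\diam(\mathcal U)\rfloor$ and no error factor, and the ``refinement argument'' you leave as an expectation is the inequality $h_\mu(M,\mathcal F,\mathcal U)\leq h_\mu(M,\mathcal F,\mathcal U_\zeta)+\zeta$ of Proposition~\ref{prop:2estimates}(b), imported from Walczak's Theorem~3.4.1, which is what lets the supremum over arbitrary (coarse) atlases be controlled by the refined ones.
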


Combining the previous two results with the variational principle for finitely generated pseudogroups established in \cite[Theorem~B]{BCV}, we obtain a variational principle for tame foliated spaces. 

\begin{maincorollary}\label{cor:tamefoliated}
\emph{Let $\mathcal F$ be a $C^2$ non-singular foliation of a closed compact Riemannian manifold $M$ with a global transversal. Assume that $(M,\mathcal F)$ is a tame foliated space. 
Then
$$h_{CP}(M,\mathcal F) \,=\, \sup \left\{
h_\mu(M, \mathcal F) \colon\,\, 
\mu\in\mathcal P_H^{\mathrm{tame}}(M,\mathcal F)
\right\}.$$}
\end{maincorollary}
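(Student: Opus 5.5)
The plan is to obtain the two inequalities separately, each assembled from Theorem~\ref{thm:entropy-holonomy} and Theorem~\ref{thm:harmonic-measure-entropy}. The easy direction is the upper bound. Theorem~\ref{thm:harmonic-measure-entropy} gives $h_\mu(M,\mathcal F)\le h_{\mathrm{CP}}(M,\mathcal F)$ for every tame harmonic measure $\mu$, so taking the supremum over $\mathcal P_H^{\mathrm{tame}}(M,\mathcal F)$ yields
$$\sup\{h_\mu(M,\mathcal F)\colon \mu\in\mathcal P_H^{\mathrm{tame}}(M,\mathcal F)\}\le h_{\mathrm{CP}}(M,\mathcal F).$$
If one wants this spelled out, it follows from the identity $h_\mu(M,\mathcal F)=\sup_{\mathcal U}\underline h_{\nu_{\mathcal U}}(G_{\mathcal U},G_{\mathcal U,1})/\diam(\mathcal U)$ together with the second equality in \eqref{eq:CP}. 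Each term on the right of that identity is dominated by $\sup_{\nu\in\mathcal P(X_{\mathcal U})}\underline h_\nu(G_{\mathcal U},G_{\mathcal U,1})/\diam(\mathcal U)$.

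For the reverse inequality, I fix $\eta>0$ and use \eqref{eq:CP}, which is valid because $\mathcal F$ has a global transversal. This produces a finite nice foliated atlas $\mathcal U$ and a measure $\nu\in\mathcal P(X_{\mathcal U})$ with
$$\frac{\underline h_\nu(G_{\mathcal U},G_{\mathcal U,1})}{\diam(\mathcal U)}>h_{\mathrm{CP}}(M,\mathcal F)-\eta .$$
If $h_{\mathrm{CP}}=\infty$, I replace the right-hand side by an arbitrary large constant. Since $(M,\mathcal F)$ is a tame foliated space (Definition~\ref{def:tamefoliation}), there is a tame harmonic measure $\mu$ whose disintegration \eqref{eq:harmU1} over $X_{\mathcal U}$ has transverse factor $\nu$, that is, $\nu_{\mathcal U}=\nu$. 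Theorem~\ref{thm:harmonic-measure-entropy} then gives
$$h_\mu(M,\mathcal F)\ge \frac{\underline h_{\nu_{\mathcal U}}(G_{\mathcal U},G_{\mathcal U,1})}{\diam(\mathcal U)}>h_{\mathrm{CP}}(M,\mathcal F)-\eta.$$
Letting $\eta\to0$ completes the proof. The $C^2$ hypothesis is only needed so that harmonic measures and their disintegrations (Proposition~\ref{prop:disintegrationharm}) make sense.

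The main obstacle I expect is a normalization mismatch. The transversal measure induced by $\mu$ via \eqref{eq:harmU1} is a priori determined only up to the normalization of the plaque conditionals $m_x$, and it may not be a probability measure. I must check that the lift provided by tameness returns exactly $\nu$, or at least a measure equivalent to $\nu$ with density bounded above and below. If it returns such an equivalent measure $\nu'$, I would argue that $\underline h_{\nu'}=\underline h_\nu$. The reason is that local metric entropy is an integral of pointwise lower decay rates of $\nu(B_n(x,\vep))$, and these rates are unchanged when the measure is multiplied by a density bounded away from $0$ and $\infty$. The integrals against $\nu$ and $\nu'$ then also agree up to the same bounded-density argument, since the integrand is the same function.

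A second point to verify is that the complete transversal used in \eqref{eq:CP} is the same $X_{\mathcal U}$ used in Theorem~\ref{thm:harmonic-measure-entropy}. If the two differ, I would invoke Lemma~\ref{le:changetransv} to show the choice of transversal does not matter.
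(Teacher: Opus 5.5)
Your proof is correct and is the combination the paper intends: the upper bound is the ``in particular'' clause of Theorem~\ref{thm:harmonic-measure-entropy}, and the lower bound comes from choosing a near-optimal pair $(\mathcal U,\nu)$ in \eqref{eq:CP}, lifting $\nu$ through Definition~\ref{def:tamefoliation}, and applying Theorem~\ref{thm:harmonic-measure-entropy}. Your normalization and transversal worries do not arise, because Definition~\ref{def:tamefoliation} applies to every complete transversal and gives \eqref{eq:harmU1} with $\nu$ itself, which is exactly the induced measure $\nu_{\mathcal U}$ in Theorem~\ref{thm:harmonic-measure-entropy}; your fallback would in any case have failed as written, since a density $c\le\rho\le C$ preserves the pointwise decay rates but only gives $c\,\underline h_\nu\le\underline h_{\rho\nu}\le C\,\underline h_\nu$, not equality, because the same integrand is integrated against different measures.
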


The previous results ensure that the Carath\'eodory-Pesin entropy of a foliation is zero if and only if the geometric entropy of every tame harmonic measure vanishes. Moreover, it shows that harmonic measures provide an adequate class of measures to detect the complexity of a foliation. In contrast, holonomy invariant measures may be too restrictive: even when they exist, they may fail to carry the full entropy of the holonomy pseudogroup, and hence of the foliation  (see Example~\ref{ex:holonomy-invariant-zero-entropy}).

\smallskip

\begin{maincorollary}\label{cor:zeroentropy}
\emph{Let $\mathcal F$ be a $C^2$ non-singular foliation of a closed, compact, Riemannian manifold $M$ with a global transversal. Assume that $(M,\mathcal F)$ satisfies the tame liftability property. Then
$$h_{\mathrm{CP}}(M,\mathcal F) 
\,=\, \sup \left\{
h_\mu(M,\mathcal F) \colon\,\, 
\mu\in\mathcal P_H^{\mathrm{tame}}(M,\mathcal F)
\right\}.$$}
\end{maincorollary}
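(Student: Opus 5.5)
The plan is to obtain Corollary~\ref{cor:zeroentropy} by combining Theorem~\ref{thm:entropy-holonomy} and Theorem~\ref{thm:harmonic-measure-entropy}. I read the tame liftability property as the property in Definition~\ref{def:tamefoliation}: every Borel probability $\nu$ on a complete transversal $X_{\mathcal U}$ of a finite nice foliated atlas $\mathcal U$ is the transverse measure, in the sense of \eqref{eq:harmU1}, of some tame harmonic measure. Under that reading the statement is exactly Corollary~\ref{cor:tamefoliated}, and I would argue it in two inequalities.

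\textbf{Upper bound.} The last assertion of Theorem~\ref{thm:harmonic-measure-entropy} gives $h_\mu(M,\mathcal F)\le h_{\mathrm{CP}}(M,\mathcal F)$ for every $\mu\in\mathcal P^{\mathrm{tame}}_H(M,\mathcal F)$. Taking the supremum over $\mu$ yields
$$\sup\{h_\mu(M,\mathcal F)\colon \mu\in\mathcal P^{\mathrm{tame}}_H(M,\mathcal F)\}\le h_{\mathrm{CP}}(M,\mathcal F).$$
Internally, this bound comes from the identification $h_\mu=\sup_{\mathcal U}\underline h_{\nu_{\mathcal U}}/\diam(\mathcal U)$ together with the second equality in \eqref{eq:CP}.

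\textbf{Lower bound.} Fix $\eta>0$. By the second equality in \eqref{eq:CP}, which is where the global transversal hypothesis enters, there are a finite nice atlas $\mathcal U$ and $\nu\in\mathcal P(X_{\mathcal U})$ with
$$\frac{\underline h_\nu(G_{\mathcal U},G_{\mathcal U,1})}{\diam(\mathcal U)}>h_{\mathrm{CP}}(M,\mathcal F)-\eta.$$
If $h_{\mathrm{CP}}=\infty$, I replace the right-hand side by an arbitrary large constant. By liftability, choose a tame harmonic $\mu$ whose induced measure on $X_{\mathcal U}$ is $\nu$, so $\nu_{\mathcal U}=\nu$. Theorem~\ref{thm:harmonic-measure-entropy} expresses $h_\mu(M,\mathcal F)$ as a supremum over atlases, and evaluating that supremum at this particular $\mathcal U$ gives
$$h_\mu(M,\mathcal F)\ge \frac{\underline h_{\nu}(G_{\mathcal U},G_{\mathcal U,1})}{\diam(\mathcal U)}>h_{\mathrm{CP}}(M,\mathcal F)-\eta.$$
Letting $\eta\to0$ completes the proof.

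\textbf{Main obstacle.} The delicate point is consistency of transversals and normalizations. The measure $\nu_{\mathcal U}$ in Theorem~\ref{thm:harmonic-measure-entropy} must be the same normalized transverse measure that the liftability property produces on the same complete transversal $X_{\mathcal U}$ used in \eqref{eq:CP}. I would verify this using Lemma~\ref{le:changetransv}, which shows independence of the chosen transversal. I would also check that rescaling a transverse measure by a constant does not change the local metric entropy $\underline h_\nu$, since this quantity is defined by pointwise decay rates of measures of balls. Finally, the $C^2$ regularity assumption is needed only to guarantee that the disintegration in Proposition~\ref{prop:disintegrationharm} is available.
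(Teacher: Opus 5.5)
Your proposal is correct and follows the paper's own route: the paper obtains this corollary simply by combining the transversal variational principle \eqref{eq:CP} of Theorem~\ref{thm:entropy-holonomy} with Theorem~\ref{thm:harmonic-measure-entropy}. The latter gives $h_\mu(M,\mathcal F)\le h_{\mathrm{CP}}(M,\mathcal F)$ and $h_\mu(M,\mathcal F)\ge \underline h_{\nu_{\mathcal U}}(G_{\mathcal U},G_{\mathcal U,1})/\operatorname{diam}(\mathcal U)$ for each atlas, and liftability on the same complete transversal lets near-optimal transverse measures be realized by tame harmonic measures, exactly as you do. One small correction to your side remark: rescaling $\nu$ by a constant $c$ multiplies $\underline h_\nu$ by $c$, because it is an integral against $\nu$ and only the pointwise rates are unchanged; this never matters, since all transverse measures involved are probabilities.
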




The next result justifies the assumption previously made that the foliated manifold satisfies the tame liftability property. Moreover, it shows that the transversal variational principle that we established cannot, in general, be converted into a variational principle over harmonic measures. 

\begin{maintheorem}
\label{thm-non-liftable}
\emph{There exists a smooth foliation $(M,\mathcal F)$ with a global
transversal $\Sigma$ and a probability measure $\nu\in\mathcal P(\Sigma)$
such that no harmonic probability measure on $(M,\mathcal F)$ induces
$\nu$ on $\Sigma$.}
\end{maintheorem}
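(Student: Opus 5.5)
The plan is to build an explicit example as a suspension foliation, where harmonic measures are well understood, and to pick a transversal measure that violates a constraint every harmonic measure must satisfy. Take a closed hyperbolic surface $S$ (or simply $S=\mathbb T^2$) and a representation $\rho\colon \pi_1(S)\to \mathrm{Diff}^\infty(\mathbb S^1)$. Form the suspension $M=(\widetilde S\times \mathbb S^1)/\pi_1(S)$, foliated by the images of $\widetilde S\times\{\theta\}$. The fibre $\Sigma=\{p\}\times\mathbb S^1$ over a base point is a global transversal, and the holonomy pseudogroup on $\Sigma$ is the group $\rho(\pi_1(S))$.

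The key step is the structural fact, following Garnett, that harmonic measures of a suspension correspond to stationary measures. A probability $m$ on $M$ is harmonic if and only if its projection to the fibre is stationary for the leafwise Brownian motion, seen as a random walk on $\Gamma=\rho(\pi_1(S))$. Concretely, the induced measure $\nu$ on $\Sigma$ must satisfy $\nu=\sum_{g}\kappa(g)\, g_*\nu$ for a fixed fully supported probability $\kappa$ on $\Gamma$, the hitting (Furstenberg) measure of Brownian motion. I would derive this from the disintegration in Proposition~\ref{prop:disintegrationharm}. The plaque densities are positive harmonic functions, and integrating $\Delta_{\mathcal F} f=0$ against test functions that are lifts of functions on $\Sigma$ produces the stationarity equation. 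Since the leaves are covering spaces of $S$, the lifted test functions are legitimate.

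Next, choose $\rho$ so that stationary measures are very restricted. For example, let the generators act by a minimal action with a unique stationary measure $\nu_\kappa$; by Deroin--Kleptsyn--Navas this holds for any minimal action without invariant measure. Alternatively use $\mathbb T^2$ and commuting rotations, where stationary measures are invariant measures and hence Lebesgue by unique ergodicity. In the torus case, let the irrational rotations be $R_\alpha,R_\beta$. Any harmonic measure then induces an $R_\alpha$-invariant measure on $\Sigma$, which must be Lebesgue. Taking $\nu=\delta_{\theta_0}$, or any non-Lebesgue measure, gives the required $\nu$, and the foliation is smooth and minimal. The theorem does not even require minimality, but this choice provides it.

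The main obstacle is making the correspondence rigorous, and in particular making "induces $\nu$" match the normalization in \eqref{eq:harmU1} for the transversal $\Sigma$. Conditional densities vary along plaques, so the induced measure is a weighted projection rather than a simple pushforward. In the abelian or flat case I would avoid this subtlety. There, the harmonic densities on plaques lift to positive harmonic functions on $\mathbb R^2$ invariant under a lattice, hence constant by Liouville. So $m$ is locally the product of leaf area and a transverse measure, and that transverse measure is holonomy invariant; invariance under $R_\alpha$ then forces it to be Lebesgue. I would carry out this Liouville argument in detail, as it is the heart of the proof.
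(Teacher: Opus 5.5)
Your argument is correct, but it reaches the conclusion by a different route from the paper.

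\textbf{The paper's route.} It uses the simplest case of your construction: base $\mathbb S^1$, so $M=\mathbb T^2$ with the irrational linear foliation by lines. It then classifies harmonic measures by Fourier analysis. Since $\Delta_{\mathcal F}=X^2$ and $X^2\varphi_k=-\frac{4\pi^2}{1+\alpha^2}(k_1+\alpha k_2)^2\varphi_k$, harmonicity forces every nonzero Fourier coefficient to vanish. Hence Haar is the only harmonic probability, and it induces Lebesgue on $\Sigma$. This uses only the definition of harmonicity tested on smooth functions: no disintegration, no gluing of densities, and no worry about how the induced measure is normalized.

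\textbf{Your route.} You take a linear foliation of $\mathbb T^3$ by planes, obtained by suspending $R_\alpha,R_\beta$, and argue through the leafwise structure. Its one genuinely nontrivial input should be stated and cited explicitly. This is Garnett's structure result: the transverse measure class of a harmonic measure is quasi-invariant under holonomy. Consequently, on almost every leaf the plaque densities glue into a positive harmonic function on the universal cover of the leaf, defined up to a positive constant. Granting this, your remaining steps are sound:
\begin{itemize}
\item Liouville for positive harmonic functions on $\mathbb R^2$ makes the densities constant.
\item Constant densities give a holonomy-invariant transverse measure.
\item Unique ergodicity of $R_\alpha$ forces that measure to be Lebesgue.
\item $\delta_{\theta_0}$ is excluded under any normalization of ``induced'', since $m$ is locally equivalent to leaf area times Lebesgue.
\end{itemize}

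\textbf{A small correction.} The glued density on a leaf is not invariant under a lattice. When $\alpha\notin\mathbb Q$, the stabilizer of a leaf in $\mathbb Z^2$ has rank at most one. No invariance is needed, though, since positive harmonic functions on $\mathbb R^2$ are already constant. The stationary-measure and Deroin--Kleptsyn--Navas discussion is unnecessary. As sketched, deriving stationarity by integrating against ``lifts'' of functions on $\Sigma$ is not a valid derivation, so you should simply drop it.

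\textbf{What each approach buys.} Yours gives a general mechanism. Whenever the leaves have the Liouville property for positive harmonic functions, harmonic measures are exactly those coming from holonomy-invariant transverse measures. Non-liftability then reduces to the existence of non-invariant transverse measures. The paper's Fourier computation is shorter and fully self-contained, but it is tied to the linear torus example.
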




\section{Preliminaries}\label{sec:prelim}

In this section, we gather more detailed information regarding foliations, complete and global transversals, and holonomy pseudogroups.

\subsection{Finitely generated pseudogroup actions}\label{sec:prelim1}

The concept of pseudogroup is essential in the study of the geometry of foliated manifolds, and was introduced by Haefliger (cf. \cite{Hae2}). As shown in \cite{GLW}, the complexity of a foliation can be studied using a special class of finitely generated pseudogroups, called holonomy pseudogroups. In this subsection, we recall what is a finitely generated pseudogroup of local homeomorphisms acting on a compact metric space  and, having fixed a set of generators, how Ghys, Langevin and Walczak define in \cite{GLW} its topological entropy.
\smallskip

Given a compact metric space  $(X,d)$, let $\text{Homeo}(X)$ denote the family of homeomorphisms between open subsets of $X$. 
For $g \in Homeo(X)$ denote by $D_g$ its domain and by $R_g=g(D_g)$ its range.

\begin{definition} \emph{A collection $G\subset \text{Homeo}(X)$ is a \emph{pseudogroup} if it satisfies the following conditions:}

\noindent [Composition] \emph{If $g,f \in G$, then $g \circ f \colon  f^{-1}(R_f \cap D_g)\to g(R_f \cap D_g)$ belongs to $G$.}
\\

\noindent [Symmetry] \emph{If $g \in G,$ then $g^{-1}\in G$.}
\\

\noindent [Identity] \emph{The identity map $id_X$ is in $G$ (so $\bigcup_{g\,\in\, G}\,(D_g\cup R_g)=X$).}
\\

\noindent [Restriction to open sets] \emph{If $g \in G$ and $W \subset D_g$ is an open subset of $X$, then $g|_W \in G$.}
\\

\noindent [Gluing condition] \emph{If $g\colon D_g \to R_g$ is a homeomorphism between open subsets of $X$ and, for each $p \in D_g$, there exists a neighborhood $V$ of $p$ in $D_g$ such that $g|_V \in G,$ then $g \in G.$}
\end{definition}

For any set $G_1 \subset \text{Homeo}(X)$ which satisfies the condition 
$$\bigcup_{g \,\in\, G_1} \{D_g \cup R_g\}\,=\,X$$
there exists a unique (smallest in the sense of inclusion) pseudogroup $G$ that contains $G_1.$ This is referred to as the \emph{pseudogroup generated by $G_1$}. If the set $G_1$ is finite, we say that $G$ is \emph{finitely generated}. We note that, for such a pseudogroup $G$, one has $g \in G$ if and only if $g \in \text{Homeo}(X)$ and for any $x \in D_g$ there exist maps $g_1,...,g_k \in G_1,$ exponents $e_1,...,e_k \in \{-1,1\}$ and an open neighborhood $V$ of $x$ in $X$ such that
$$V \subset D_g \quad \text{and} \quad g|_V \,=\,g_1^{e_1}\circ \ldots \circ g_k^{e_k}|_V.$$
In particular, holonomy pseudogroups associated to a finite atlas of foliated charts on a Riemannian manifold $M$ are finitely generated (cf. \cite{Thu2}).

\smallskip

In what follows, we fix a pseudogroup $G$ acting on a compact metric space $(X,d)$ with a finite symmetric generating set
$$G_1=\{\text{id}\mid_X,g_1,g_1^{-1},g_2,g_2^{-1},\ldots,g_k,g_k^{-1}\}.$$
For any positive integer $n$, define
$$G_n=\{g_{i_1} \circ \ldots \circ g_{i_n} \colon \,  g_{i_j} \in G_1\}$$
whenever the composition $g_{i_1} \circ \ldots \circ g_{i_n}$ has a non-empty domain.
For each $x\in X$, set
$$G_n^x =\{g\in G_n \colon x\in D_{g}\}.$$
Since, by assumption, $id_X \in G_1$, we get $G_m \subset G_n$ whenever $m \le n.$ To emphasize the dependence of the previous concept, and further results, on the generating set, sometimes we write $(G,G_1)$ instead of simply $G.$

\smallskip

Given $x\in X$, $n \in \N$ and $\vep>0$, define
\begin{equation}\label{eq:defbynballG}
B^G_{n}(x, \vep)\,=\,\bigcap_{g\,\in\, G_n} \,g^{-1}\big(B(g(x),\vep\big) \cap D_g
\end{equation}
where $D_g\subset X$ is the domain of $g$.

\subsection{Topological entropy of a pseudogroup}\label{subsec:entropyPG}

There are in the literature several proposals to generalize the notion of topological entropy for a single dynamics to the setting of finitely generated semigroup actions; for an account on some of them, we refer the reader to \cite{CRV,XM} and references therein. In this subsection, we recall the notion of topological entropy of a pseudogroup introduced by Ghys, Langevin and Walczak in \cite{GLW}, which we denote by $\htop(G,\,G_1)$ to emphasize its dependence on the set $G_1$ of generators.

\smallskip

\begin{definition}\cite{GLW}
\emph{Given $n \in \N$ and $\vep>0$, we say that two points $x,y \in X$ are $(n,\vep)-$separated by $(G,G_1)$ whenever there exists $g\in G_n$ such that $x,y \in D_g$ and $d(g(x),g(y)) \ge \vep.$ Let $s(n,\vep)$ denote 
the maximal number of $(n,\vep)$-separated points of $X$. The \emph{topological entropy of $(G, G_1)$} is defined by
\begin{equation*}\label{def:entropy-fg}
\htop(G,G_1)\,=\,\lim_{\vep\, \to\, 0^+}\,\limsup_{n\, \to\,+\infty}\,
\frac{1}{n}\log s(n,\vep)
\end{equation*}}
\end{definition}

It is known that, if $G_1$ and $G'_1$ are generating sets of the pseudogroup $G$ and the topological entropy $\htop(G,G_1)=0,$ then the topological entropy $\htop(G,G'_1)$ vanishes as well (cf. \cite{GLW}).

\subsection{Carath\'eodory-Pesin entropy of a pseudogroup}\label{subsec:CPentropyPG}

Consider a finitely generated pseudogroup $(G,G_1)$ generated by a finite set $G_1$ of local homeomorphisms of a compact metric space $(X,d)$. Following \cite{BCV}, fix an arbitrary subset $Z \subset X.$ Given $N\in \mathbb N$ and $\vep>0$ denote by $I_N(\vep)\subset Z \times \{n\in \mathbb N\colon n\geqslant N\}$ a finite or countable set such that $Z$ is covered by dynamical balls $B^G_{n_j}(x_j, \vep)$, where $(x_j,n_j) \in I_N(\vep)$ and $n_j \geqslant N$. In other words,
$$
Z \,\subset\, \bigcup_{(x_j,n_j) \,\in\, I_N(\vep)}B^G_{n_j}(x_j, \vep).
$$
Denote by $C_Z(N, \vep)$ the family of all such subsets $I_N(\vep)$. For notational simplicity, we will write $j\in I_N(\vep)$ to identify the pair $(x_j,n_j) \in I_N(\vep)$.

\smallskip
For $s\geq0$, $N\in\mathbb N$ and $\vep>0$, define
$$
M_Z^G(s,\vep,N)
\,=\,
\inf
\Big\{
\sum_{j\,\in\, I_N(\vep)}
e^{-s n_j}
\colon \,
I_N(\vep)\in C_Z(N,\vep)
\Big\}.
$$
Since the family of admissible covers (that is, covers by dynamical balls $B^G_{n_j}(x_j, \vep)$, where $(x_j,n_j) \in I_N(\vep)$ and $n_j \geqslant N$) decreases as $N$ increases, the function
$N\mapsto M_Z^G(s,\vep,N)$ 
is non-decreasing, and so the limit
$$
M_Z^G(s,\vep)
\,=\,
\lim_{N\,\to\,+\infty}\,
M_Z^G(s,\vep,N)
$$
exists. It is not hard to check, using the classical approach in \cite{Pe97}, that there is a well defined value
$$h_Z(G,G_1,\vep)\,=\,
\sup\Big\{s\geq0 \colon \, M_Z^G(s,\vep) = +\infty\Big\}\,=\,
\inf\Big\{s\geq0 \colon \,M_Z^G(s,\vep)=0\Big\}$$
and that the map $\vep>0 \mapsto h_Z(G,G_1,\vep)$ is monotonic.

\begin{definition}
\emph{The \emph{Carath\'eodory-Pesin entropy of $(G,G_1)$ restricted to $Z$} is defined by
$$h_Z(G,G_1) \,=\,\lim_{\vep\,\to\,0^+}\,
h_Z(G,G_1,\vep).$$}
\end{definition}

\smallskip

There is an alternative way, described in  \cite{Pe97}, of defining the Carath\'eodory-Pesin entropy of a pseudogroup by using finite or countable open covers. Fix an arbitrary subset $Z$ of a compact metric space $(X,d)$. Given $N\in \mathbb N$ and a finite open cover $\mathcal O=(O_i)_{i}$ of $X$, define
\begin{equation*}\label{eqdefCil}
\mathcal O^{(N)}\,= \bigvee_{g_0,\, g_1,\, \ldots,\, g_{N-1}\, \in\, G_1}\,\, \bigvee_{j=0}^{N-1}\, (g_{j}\circ \ldots \circ g_1\circ g_0)^{-1}(\mathcal O).
\end{equation*}
An element in $\mathcal O^{(N)}$ is said to be length $N$. Moreover, two points $x,y$ belong to an element $O\in \mathcal O^{(N)}$ if and only if $g_{j}\circ \dots \circ g_1\circ g_0(x)$ and $g_{j}\circ \dots \circ g_1\circ g_0(y)$ belong to the same element of $\mathcal O$ for every $g_i\in G_1$ and any $0\le j\le N-1$. Let $C_Z(N, \mathcal O)$ be the family of all subcovers of $\bigcup_{n\,\ge\, N} \mathcal O^{(n)}$, whose parameterization is denoted by $I_N(\mathcal O)$, in such a way that 
$$Z \,\subset\, \bigcup_{j \,\in\, I_N(\mathcal O)} O_{n_j}(x_j)$$
where $n_j \geqslant N$ and $x_j\in O_{n_j}(x_j)$.
\smallskip

Fix an open cover $\mathcal O$ of $X$. For $s\geq0$ and $N\in\mathbb N$, define
$$
M_Z^G(s,\mathcal O,N)
\,=\,
\inf
\Big\{
\sum_{j\,\in\, I_N(\mathcal O)}
e^{-s n_j}
\colon \,
I_N(\mathcal O)\in C_Z(N,\mathcal O)
\Big\}.
$$
Since the family of admissible covers decreases as $N$ increases, the function
$N\mapsto M_Z^G(s,\mathcal O,N)$ 
is non-decreasing, and so the limit
$$M_Z^G(s,\mathcal O) \,=\,\lim_{N\,\to\,\infty}\,
M_Z^G(s,\mathcal O,N)$$ 
exists. It is not hard to check (details in \cite{Pe97}) that there is a well defined value
$$h_Z(G,G_1,\mathcal O)
\,=\,
\sup\Big\{s\geq0 \colon \, M_Z^G(s,\mathcal O)\,=\,+\infty\Big\}
\,=\,
\inf\Big\{s\geq0 \colon \,M_Z^G(s,\mathcal O)\,=\,0\Big\}.
$$
Moreover (cf. \cite[Section 11]{Pe97}) the Carath\'eodory-Pesin entropy of $(G,G_1)$ restricted to $Z$ can be expressed as 
\begin{equation*}\label{eq:Pesincoverings}
h_Z(G,G_1)
\,=\,
\lim_{\diam \mathcal O \,\to\,0^+}\,
h_Z(G,G_1,\mathcal O)    
\end{equation*}
and
\begin{equation*}\label{eq:Pesincoverings2}
\htop(G,G_1)
\,=\,
\lim_{\diam \mathcal O \,\to\,0^+}\,
\limsup_{n\,\to\, +\infty}\,
\frac1n \,
\log N_n(\mathcal O)
\end{equation*}
where, for every finite open cover $\mathcal U$ of $X$, the number $N_n(\mathcal O)$ is the minimal cardinality of a cover of $X$ by elements of $\mathcal O^{(n)}$.

\subsection{A partial variational principle for pseudogroups}

We start by defining the \emph{local metric entropy} of a Borel probability measure $\nu \in \mathcal P(X)$.

\begin{definition}
\emph{Given $\nu \in \mathcal P(X)$, the \emph{ local metric entropy of $\nu$ with respect to the pseudogroup $(G, G_1)$} is the average
\begin{equation*}\label{eqQ:deflocalent}
\underline h_\nu((G,G_1))
\,=\, \int_X\, \lim_{\vep\,\to\, 0^+}\,\liminf_{n\,\to\, +\infty}\,
-\frac1n \,\log \nu (B^G_n(x,\vep)) \; d\nu(x)
\end{equation*}
where $B_n^G(x,\vep)$ is defined in ~\eqref{eq:defbynballG}.}
  
\end{definition}

\begin{theorem}\cite[Theorem~B]{BCV}\label{thm:variationalpseudogroups} Let $(G,G_1)$ be a finitely generated pseudogroup acting on a compact metric space $(X,d)$. Then:
\begin{itemize}
\item[(a)] For every Borel subset $Z$ of $X$,
$$h_Z((G,G_1)) \,=\, \sup\Big\{\underline{h}_{\nu}(G,G_1) \colon \,\nu \in \mathcal P(X) \text{ and } \nu(Z)=1\Big\}.$$
\item[(b)] $\htop(G,G_1) \,\geqslant \, h_X(G,G_1).$ 
\end{itemize}
\end{theorem}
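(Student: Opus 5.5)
The statement is quoted from \cite[Theorem~B]{BCV}. If I had to prove it, I would follow the Feng--Huang strategy for Bowen-type entropies, adapted to the dynamical balls $B^G_n(x,\vep)$ in \eqref{eq:defbynballG}. Throughout I would work under the standing simplification used in this paper: the pseudogroup comes from a global transversal, so $\tau_n=d_n$ is a genuine metric. This gives the triangle-inclusion
$$B^G_n(x,\vep)\subset B^G_n(y,2\vep)\quad\text{whenever } y\in B^G_n(x,\vep).$$
For a general pseudogroup this inclusion has to be checked separately, because the domains $D_g$ intervene.

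For part (b), I would fix $\vep$ and $n\ge N$ and take a maximal $(n,\vep)$-separated set $E\subset X$. By maximality, every $y\in X$ lies in some $B^G_n(x,2\vep)$ with $x\in E$, using that $d_n$ is a metric. Hence
$$M^G_X(s,2\vep,N)\le s(n,\vep)\,e^{-sn}.$$
Choosing $n$ along a sequence realising the $\limsup$ in the definition of $\htop$ forces $M^G_X(s,2\vep)=0$ whenever $s>\limsup_n \frac1n\log s(n,\vep)$. Letting $\vep\to0^+$ gives $h_X(G,G_1)\le \htop(G,G_1)$.

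For part (a), the inequality ``$\ge$'' is an entropy distribution principle. Take $\nu$ with $\nu(Z)=1$ and $\underline h_\nu(G,G_1)>s$. Then the integrand, whose measurability must be verified since the balls are open, is at least $s-\delta$ on a set of positive measure. By monotonicity in $\vep$ and a countable intersection argument, I would find $\vep_0$, $N$ and a Borel set $A\subset Z$ with $\nu(A)>0$ such that
$$\nu(B^G_n(x,\vep))\le e^{-(s-\delta)n}\quad\text{for all } x\in A,\ n\ge N,\ \vep\le 2\vep_0.$$
Now take any cover of $Z$ by balls $B^G_{n_j}(x_j,\vep_0)$ and discard those missing $A$. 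For each remaining ball pick $y_j$ in its intersection with $A$; then the ball lies inside $B^G_{n_j}(y_j,2\vep_0)$. Therefore
$$\sum_j e^{-(s-\delta)n_j}\ge \sum_j \nu\big(B^G_{n_j}(y_j,2\vep_0)\big)\ge \nu(A)>0,$$
so $h_Z(G,G_1,\vep_0)\ge s-\delta$, and the inequality follows.

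The inequality ``$\le$'' is the main obstacle. It needs a dynamical Frostman lemma: if $K$ is compact and $h_K(G,G_1,\vep)>s$, then there is $\nu\in\mathcal P(K)$ with $\nu(B^G_n(x,\vep'))\le C e^{-sn}$ for all $x$ and all $n\ge N$. This forces $\underline h_\nu\ge s$. I would first introduce the weighted quantity $W_K(s,\vep,N)$, the infimum of $\sum c_j e^{-sn_j}$ over weighted covers $\sum c_j\chi_{B_{n_j}(x_j,\vep)}\ge\chi_K$. Using a Vitali $5r$-type covering lemma for $d_n$-balls, I would compare it with $M_K$ (with $\vep$ replaced by $6\vep$), at the cost of an $e^{\delta n}$ factor. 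Then, on $C(K)$, I would apply Hahn--Banach to the sublinear functional $p(f)=W(f,s,\vep,N)/W_K(s,\vep,N)$; via Riesz, this produces the measure $\nu$.

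Finally, to pass from compact sets to a Borel set $Z$, I would need $h_Z=\sup\{h_K\colon K\subset Z \text{ compact}\}$. I would get this by showing that $W$ behaves like an outer capacity that is continuous along increasing sequences, and then invoking Choquet's capacitability theorem for analytic sets. I expect this capacitability step, together with the Vitali lemma in the pseudogroup setting where domains vary, to be the delicate part.
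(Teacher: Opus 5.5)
The paper contains no proof of this statement. It is imported from \cite[Theorem~B]{BCV}, so there is no internal argument to compare against.

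Your outline is the Feng--Huang scheme: a re-centred entropy distribution principle for ``$\geq$''; weighted covers, a multi-$n$ Vitali lemma, a Hahn--Banach/Riesz Frostman lemma and capacitability for ``$\leq$''; and a maximal separated set for (b). In the setting you fix, it is sound. When the generators are global homeomorphisms, $B^G_n(x,\vep)$ is the open ball of the continuous metric $d_n(x,y)=\max_{g\in G_n}d(g(x),g(y))$. This metric is non-decreasing in $n$ because $\mathrm{id}\in G_1$, and those properties are all your steps use. One small omission: Choquet's theorem also needs $W(K_i)\to W(K)$ along decreasing compact sets $K_i\downarrow K$. This follows from lower semicontinuity of $\sum_j c_j\chi_{B_j}$, since the balls are open.

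The gap is one of scope. The statement is asserted for an arbitrary finitely generated pseudogroup. Your key steps rely on the engulfing property $B^G_n(x,\vep)\subset B^G_n(y,2\vep)$ for $y\in B^G_n(x,\vep)$, and this fails in general for the balls in \eqref{eq:defbynballG}.

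Indeed, $y\in B^G_n(x,\vep)$ only forces $G^x_n\subset G^y_n$. Suppose $x\in\partial D_g$ for some $g\in G_n$ and $y\in D_g$ is close enough to $x$. Then $y\in B^G_n(x,\vep)$, but $x\notin B^G_n(y,2\vep)$, because membership in the latter requires $x\in D_g$. This happens at every scale, so letting $\vep\to0^+$ does not cure it.

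This failure breaks three steps:
\begin{itemize}
\item the re-centring at $y_j\in A$ in your distribution principle;
\item the disjoint-subfamily selection in the Vitali lemma;
\item the covering in (b). Maximality of an $(n,\vep)$-separated set only controls $d(g(x),g(y))$ for those $g$ defined at both points. It therefore does not give $y\in B^G_n(x,2\vep)$, which needs $y\in D_g$ for every $g\in G^x_n$.
\end{itemize}

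What you have is therefore a proof for the group case. That is the only case the paper actually uses: Theorem~\ref{thm:entropy-holonomy} assumes a global transversal, and the paper itself notes that \cite{BCV} relies on a genuine dynamical metric $d_n$. To reach the statement as written, you need an additional ingredient. One option is to work with the balls of a genuine continuous dynamical metric, non-decreasing in $n$, and compare it with $(n,\vep)$-separation for (b). The other is to prove an engulfing property for the pseudogroup balls directly. You flagged this point but left it unresolved.
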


\smallskip

\subsection{ Foliations and holonomy pseudogroups} \label{sec:foliationsandhpseudo}

The formal definition of a $C^r$-foliation, where $1\le r\le \infty,$ is as follows.

\begin{definition} \emph{A $p$-dimensional $C^r$-foliation $\mathcal F$ on a $n$-dimensional manifold $M$ is a decomposition of $M$ into connected submanifolds $\{\mathcal F_{\alpha}\}_{\alpha \, \in\, A},$ called \emph{leaves of the foliation $\mathcal F$}, such that for any point $x \in M$ there exist a neighborhood $V$ of $x$ in $M$ and a $C^r$-differentiable chart 
$\phi=(\phi_1,\phi_2) \colon V \to \RR^n =\RR^p \times \RR^{n-p}$ with the following property: for any leaf $\mathcal F_{\alpha}$ of $\mathcal F$, the connected components of $\mathcal F_{\alpha} \cap V$
are described by the equation $\phi_2(\cdot)=v$, for $v\in \mathbb R^{n-p}$. The connected components of $\mathcal F_{\alpha} \cap V$ are called \emph{plaques}. Such foliation $\mathcal F$ has codimension $n-p$. }
\end{definition}

To connect foliations and pseudogroups, we need to address nice coverings.

\begin{definition}\label{def:foliated-atlas}
\emph{We say that a foliated manifold $(M,\mathcal F)$  admits a \emph{nice foliated atlas} $\mathcal{A}$ (or that the cover of $M$ by the domains $V_\phi$ of the charts $\phi \in \mathcal{A}$ is nice) if the following conditions hold:
\begin{enumerate}
\item[$(a)$] The cover $\{V_\phi g \colon \, \phi \in \mathcal{A}\}$ is locally finite.
\item[$(b)$] For any chart $\phi \in \mathcal{A}$, the range $\phi(V_\phi) \subset \RR^n$ is an open cube.
\item[$(c)$] For any $\phi, \psi \in \mathcal{A}$ satisfying the condition $V_\phi \cap V_\psi \neq  \emptyset$ there exists a chart $\varphi$ distinguished by the foliation $\mathcal F$ such that $\varphi(V_\varphi)$ is an open cube, $V_\varphi$ contains the closure of $V_\phi \cup V_\psi$ and $\phi = \varphi|_{V_\phi}.$
\end{enumerate}}
\end{definition}

It is known that any $C^1$ non-singular foliation of a compact Riemannian manifold admits a finite nice covering and that any nice covering $\mathcal{U}$  determines a finitely generated holonomy pseudogroup. Moreover, by compactness of $M$, any nice atlas of a $p$-dimensional foliation $\mathcal F$ admits a finite nice subatlas $\mathcal{A}$. We refer the reader to \cite[Chapter 1]{Walczak} for proofs of the previous assertions.

\smallskip

Since the range $\phi(V_\phi)$ of any $\phi \in \mathcal{A}$ in an open cube in $\RR^n,$ it can be written as a product $U_1(\phi)\times U_2(\phi)$, 
where $U_1(\phi)$ and $U_2(\phi)$ are open
cubes in $\RR^p$ and $\RR^{n-p}$, respectively. 
Therefore, each $U_1(\phi)\times\{y_0\},$ where
$y_0 \in U_2(\phi),$ is isomorphic to a plaque in $V_\phi$, and the points of $U_2(\phi)$ parameterize the plaques in $V_\phi.$

\begin{definition}\label{def:transversals}
\emph{Given a chart $\phi$ and a point $x_0 \in U_1(\phi)$, the submanifold
$$T_\phi \,=\,\phi^{-1}\big(\{x_0\} \times U_2(\phi)\big)$$ 
of $V_\phi$ intersects every plaque of $V_\phi$ exactly once. It is called a \emph{local transversal} of $V_\phi.$}
\end{definition}

\smallskip
The domains of the finite nice atlas $\mathcal{A}$ constitute a finite nice covering
$\mathcal{U}=\{U_1,...,U_k\}$ of $M$ which determines a finitely generated holonomy
pseudogroup $(G_{\mathcal{U}}, G_{\mathcal{U},1})$ we now describe. Take $U_i,U_j \in \mathcal{U}$ with local transversals $T_i \subset U_i$ and
$T_j \subset U_j.$ Assume that $U_i \cap U_j\neq \emptyset$. Then, for any $x \in U_i \cap U_j$ there exist a unique plaque $P_i(x)\in U_i$ and a unique plaque $P_j(x)\in U_j$ such that $x \in P_i(x) \cap P_j(x).$ 

\begin{definition}\label{def:holonomy}
\emph{Given a finite nice atlas $\mathcal A$, 
and $U_i,U_j\in\mathcal U$ with $U_i\cap U_j\neq\emptyset$, the local
holonomy transformation
$ 
g_{U_i,U_j}
$ 
is the map which sends the point $T_i\cap P_i(x)$ to the point
$T_j\cap P_j(x)$, whenever these two plaques meet in $U_i\cap U_j$.
The local holonomy map $g_{U_i,U_j}$ is a homeomorphism between open subsets of the local transversals $T_i$ and $T_j$. The finite set 
$$G_{\mathcal{U},1}\,=\,\big\{g_{_{U_i,U_j}} \colon \, U_i,U_j \in \mathcal{U}~and ~U_i \cap U_j \neq \emptyset \big\}$$
generates the \emph{holonomy pseudogroup $G_{\mathcal{U}}$ determined by the nice covering $\mathcal{U}$.}}
\end{definition}

Regarding the domains of the maps $g_{_{U_i,U_j}}$ of the pseudogroup, we use the following notation. After shrinking the foliated boxes if necessary, we choose the local transversals with compact closure inside the corresponding boxes. We shall denote by
$$
X_{\mathcal U}
=
\bigsqcup_{i=1}^k \overline{T_i}
$$
the associated compact transversal, endowed with the induced metric.

\begin{definition}\label{def:global}
\emph{The pseudogroup $G_{\mathcal{U}}$ acts on the {disjoint} union 
$$
X_{\mathcal U}
=
\bigsqcup_{i=1}^k \overline{T_i}
$$
which is called a \emph{complete transversal} of $\mathcal F$. We shall say that the foliation $\mathcal F$ admits a global holonomy model if the complete transverse section $X_{\mathcal U}$ can be chosen so that the generators of the holonomy pseudogroup extend to homeomorphisms of the entire space $X_{\mathcal U}$.}
\end{definition}
 
We note that any two holonomy pseudogroups of a $C^1$ non-singular foliation $\mathcal F$ on a closed Riemannian manifold $M$ 
are isomorphic (cf. \cite{Hae2,Hae3}).


\section{Proof of Theorem~\ref{thm:entropy-holonomy}}\label{sec:proof-entropy-holonomy}

We start by proving the equality
\begin{equation}\label{eq:equalitythmA}
h_{\mathrm{CP}}(Z,\mathcal F)
\,=\,
\sup_{\mathcal U}
\left\{
\frac{
h_{Z_{\mathcal U}}(G_{\mathcal U},G_{\mathcal U,1})
}
{\operatorname{diam}(\mathcal U)}
\right\}.    
\end{equation}
Fix a finite nice foliated atlas $\mathcal U$ and write
$ 
d_{\mathcal U}:=\operatorname{diam}(\mathcal U).
$ 
Since $Z$ is $\mathcal F$-saturated, the set
$ 
Z_{\mathcal U}=Z\cap X_{\mathcal U}
$ 
is invariant under the holonomy pseudogroup
$(G_{\mathcal U},G_{\mathcal U,1})$. Indeed, if
$x\in Z_{\mathcal U}$ and $g(x)$ is defined for some
$g\in G_{\mathcal U}$, then $g(x)$ belongs to the same leaf as $x$.
Since $Z$ is saturated, $g(x)\in Z$, and hence
$g(x)\in Z_{\mathcal U}$.
We prove the two inequalities separately.
\medskip

\medskip
\noindent $(\geq)\quad $ We now show that
$$
h_{\mathrm{CP}}(Z,\mathcal F)
\geq
\frac{
h_{Z_{\mathcal U}}(G_{\mathcal U},G_{\mathcal U,1})
}
{d_{\mathcal U}}.
$$
More precisely, for every $s>0$ such that 
$$
0<s<
\frac{
h_{Z_{\mathcal U}}(G_{\mathcal U},G_{\mathcal U,1})
}
{d_{\mathcal U}}.
$$
we will prove that $s\leq h_{\mathrm{CP}}(Z,\mathcal F)$.
Fix $r_0>0$ and $0<\varepsilon<\varepsilon_0/2$. Consider an arbitrary admissible cover of $Z$
$$Z\subset
\bigcup_{j\,\in\, I}
B_{r_j}(x_j,\varepsilon)$$
where $x_j\in Z$ and $r_j\geq r_0.$
For each $j\in I$, set
$$
n_j=\left\lfloor \frac{r_j}{d_{\mathcal U}}\right\rfloor.
$$
Then
$ 
n_j \,d_{\mathcal U}\,\leq\, r_j < (n_j+1)\,d_{\mathcal U}
$ 
and therefore
$$
B_{r_j}(x_j,\varepsilon)
\subset
B_{n_j d_{\mathcal U}}(x_j,\varepsilon).
$$
Let
$ J \,=\,\left\{
j\in I \colon B_{n_j \,d_{\mathcal U}}(x_j,\varepsilon)\cap Z_{\mathcal U}\neq\emptyset
\right\}
$ 
and, 
for each $j\in J$, choose
$ 
z_j\in
B_{n_j \,d_{\mathcal U}}(x_j,\varepsilon)\cap Z_{\mathcal U}.
$ 
Then
$$
Z_{\mathcal U} \subset \bigcup_{j\,\in\, J}
B_{n_jd_{\mathcal U}}(z_j,2\varepsilon).
$$
By the connection between foliated dynamical balls and holonomy dynamical balls established in \cite[Lemme~3.7]{GLW}, there exists
$C>0$ such that, for $\varepsilon>0$ small enough,
$$
Z_{\mathcal U}
\subset
\bigcup_{j\,\in\, J}
B_{n_j}^{G_{\mathcal U}}(z_j,\,2C\varepsilon).
$$
Moreover, if
$ 
N_0=\left\lfloor \frac{r_0}{d_{\mathcal U}}\right\rfloor
$ 
then $n_j\geq N_0$ for all $j\in J$. Thus, the previous family is an admissible cover of $Z_{\mathcal U}$ for the Carathéodory--Pesin
structure of the pseudogroup. Consequently,
$$
M^{G_{\mathcal U}}_{Z_{\mathcal U}}
(s \,d_{\mathcal U},\,2C\varepsilon,N_0)
\,\leq\,
\sum_{j\,\in\, J}
\,e^{-s \, d_{\mathcal U} \,n_j}.
$$
Since
$ 
r_j < (n_j+1)\,d_{\mathcal U},
$ 
we have
$ 
e^{-s\,r_j} \,\geq\,
e^{-s \,d_{\mathcal U}}\,e^{-s \,d_{\mathcal U}\,n_j}
$ 
and, consequently,
$$
\sum_{j\,\in\, I}\, e^{-s\,r_j}
\,\geq\,
e^{-s\, d_{\mathcal U}}
\,\sum_{j\,\in\, J}\,e^{-s\, d_{\mathcal U}\, n_j}
\,\geq\,
e^{-s \,d_{\mathcal U}}\,
M^{G_{\mathcal U}}_{Z_{\mathcal U}}
(s \,d_{\mathcal U},\,2C\varepsilon,N_0).
$$
Taking the infimum over all admissible covers of $Z$, we obtain
$$
M_Z(s,\varepsilon,r_0)
\geq
e^{-s \,d_{\mathcal U}}
\, M^{G_{\mathcal U}}_{Z_{\mathcal U}}
(s \,d_{\mathcal U},\,2C\varepsilon,N_0).
$$
Moreover, by the choice of $s$, one has
 $
s \,d_{\mathcal U} \,<\,
h_{Z_{\mathcal U}}(G_{\mathcal U},G_{\mathcal U,1}).
$ 
Therefore, for all sufficiently small $\varepsilon>0$ one has
$$
\lim_{N\,\to\, +\infty}
M^{G_{\mathcal U}}_{Z_{\mathcal U}}
(s \,d_{\mathcal U},\,2C\varepsilon,N)
\,=\,+\infty.$$
Since $N_0\to+\infty$ as $r_0\to+\infty$, it follows that
$$
M_Z(s,\varepsilon)
\,=\,
\lim_{r_0\,\to\, +\infty}\,M_Z(s,\varepsilon,r_0)
\,=\, +\infty.$$
Hence,
$ 
s\,\leq\, h_{\mathrm{CP}}(Z,\mathcal F,\varepsilon),
$ 
and letting $\varepsilon\to0^+$ we get
$ 
s\,\leq\, h_{\mathrm{CP}}(Z,\mathcal F).$ 
Since $s$ is arbitrary, this proves that
$$
h_{\mathrm{CP}}(Z,\mathcal F)
\,\geq\,
\frac{
h_{Z_{\mathcal U}}(G_{\mathcal U},G_{\mathcal U,1})
}
{d_{\mathcal U}}.
$$
Taking now the supremum over $\mathcal U$, we obtain
$$
h_{\mathrm{CP}}(Z,\mathcal F)
\,\geq\,
\sup_{\mathcal U}
\left\{
\frac{
h_{Z_{\mathcal U}}(G_{\mathcal U},G_{\mathcal U,1})
}
{\operatorname{diam}(\mathcal U)}
\right\}.
$$

\medskip

\noindent $(\leq)\quad $ Given
$$
s>
\sup_{\mathcal U}
\left\{
\frac{
h_{Z_{\mathcal U}}(G_{\mathcal U},G_{\mathcal U,1})
}
{\operatorname{diam}(\mathcal U)}
\right\}.
$$
we will prove that
$ 
h_{\mathrm{CP}}(Z,\mathcal F)\leq s.
$ 
Fix a finite nice foliated atlas $\mathcal U$. By the choice of $s$,
$ 
h_{Z_{\mathcal U}}(G_{\mathcal U},G_{\mathcal U,1})
<
s\,d_{\mathcal U}.
$ 
Take any real number $a$ such that
$$
h_{Z_{\mathcal U}}(G_{\mathcal U},G_{\mathcal U,1})
\,<\, a\,<\, s\,d_{\mathcal U}.
$$
Then, by the definition of the Carathéodory--Pesin critical value, for every $\eta>0$, for all sufficiently small $\varepsilon>0$ and all sufficiently large $N\in\mathbb N$, there exists a finite or countable cover
\begin{equation}\label{eq:cover}
Z_{\mathcal U}
\subset
\bigcup_{j\,\in\, J}\,
B_{n_j}^{G_{\mathcal U}}(z_j,\varepsilon),
\qquad
z_j\in Z_{\mathcal U},\quad n_j\geq N
\end{equation}
such that
$$
\sum_{j\,\in\, J}\,e^{-a n_j}\,<\,\eta.
$$
Since $\mathcal U$ is finite and $X_{\mathcal U}$ is a complete transversal, there exists $C_0\geq1$, depending only on $\mathcal U$, such that every point of $M$ can be joined inside its plaque to a point of $X_{\mathcal U}$ by a leaf curve of length at most 
$C_0\,d_{\mathcal U}$. 
\smallskip

\begin{claim}\label{cl:1}
\emph{$Z \subset \bigcup_{j\,\in\, J}\,
B_{(n_j+C_0)\,d_{\mathcal U}}(z_j,C\varepsilon)$, 
where $C>0$ is the constant given by \cite[Lemme~3.7]{GLW}.}
\end{claim}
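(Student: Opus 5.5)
Let $w\in Z$ be arbitrary. The plan is to transport $w$ to the transversal, locate it in one of the holonomy dynamical balls of \eqref{eq:cover}, and then return along the leaf while controlling the length of the leafwise curves involved.

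First, the choice of $C_0$ gives a leaf curve $\sigma$ inside the plaque of $w$, of length at most $C_0\,d_{\mathcal U}$, joining $w$ to a point $w'\in X_{\mathcal U}$. Since $Z$ is $\mathcal F$-saturated, $w'\in Z\cap X_{\mathcal U}=Z_{\mathcal U}$. By \eqref{eq:cover} there is $j\in J$ with $w'\in B^{G_{\mathcal U}}_{n_j}(z_j,\varepsilon)$. The existence of a global transversal makes the holonomy pseudogroup a group, so $\tau_n=d_n$ and this ball is a genuine dynamical ball for the holonomy maps in $G_{\mathcal U,n_j}$.

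Next, we invoke the converse direction of \cite[Lemme~3.7]{GLW}. It says that holonomy closeness up to word length $n$ controls leafwise closeness: if $w'\in B^{G_{\mathcal U}}_{n}(z,\varepsilon)$, then $D_{n d_{\mathcal U}}(z,w')<C\varepsilon$. The reason is that every leaf curve of length at most $n\,d_{\mathcal U}$ starting at $z$ crosses at most about $n$ plaques of $\mathcal U$, so it is shadowed by a chain of at most $n$ generators. The transverse distance along the curve is then bounded by $C$ times the transverse distance of the holonomy images, using that plaques are uniformly close when their transversal points are.

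It remains to pass from $w'$ back to $w$ and to lengthen the radius from $n_j d_{\mathcal U}$ to $(n_j+C_0)d_{\mathcal U}$. Here the point is to compare $z_j$ with $w$ through the projection $\Pi_w$. Any leaf curve $\gamma$ from $z_j$ of length at most $(n_j+C_0)d_{\mathcal U}$ is split into an initial piece of length at most $n_j d_{\mathcal U}$, controlled by the previous step, and a tail of length at most $C_0 d_{\mathcal U}$. The tail is handled by holonomy words of length at most $n_j+C_0$; we apply the lemma with $n_j+C_0$ in place of $n_j$, after replacing $N$ by $N+C_0$ in \eqref{eq:cover} if necessary.

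The displacement from $w'$ to $w$ along $\sigma$ stays inside a single plaque. Projection along plaques in a chart is uniformly continuous, so $\dM(w,\Pi_w(z_j))$ and the symmetric quantities $\delta$ stay below a constant multiple of $\varepsilon$. Possibly enlarging $C$, this gives $D_{(n_j+C_0)d_{\mathcal U}}(z_j,w)<C\varepsilon$, that is, $w\in B_{(n_j+C_0)d_{\mathcal U}}(z_j,C\varepsilon)$.

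The main obstacle is this last step. The shift by $\sigma$ means the leafwise curves from $z_j$ and from $w$ are not based at corresponding points, so the projection $\Pi$ must be rebased. I would handle this by concatenating $\sigma$ with the curve and absorbing the extra $C_0$ plaque crossings into the enlarged word length. The constant from \cite[Lemme~3.7]{GLW}, which is uniform in $n$, then covers the whole comparison.
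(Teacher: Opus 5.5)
Your route is the same as the paper's: move $w$ along its plaque to $w'\in X_{\mathcal U}$, use saturation, pick $j$ from \eqref{eq:cover}, invoke \cite[Lemme~3.7]{GLW}, and append the plaque segment. The paper states the final inclusion in one sentence. That inclusion does not follow, and as stated it fails for small $\varepsilon$. By definition $D_r(x,y)\ge \dM(x,y)$, so $B_r(z_j,C\varepsilon)$ lies inside the ambient ball of radius $C\varepsilon$ about $z_j\in X_{\mathcal U}$. Hence $\bigcup_j B_{r_j}(z_j,C\varepsilon)$ lies in the $C\varepsilon$-neighbourhood of the compact set $X_{\mathcal U}$, whatever the radii $r_j$. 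Any point of $Z\setminus X_{\mathcal U}$ is at positive distance from $X_{\mathcal U}$, so it is not covered once $C\varepsilon$ is smaller than that distance. In your last step, the claim that everything stays below a multiple of $\varepsilon$ overlooks the term $\dM(z_j,w)$ in $D_r$. That term is of the order of $\ell(\sigma)\le C_0d_{\mathcal U}$, not of $\varepsilon$, and no rebasing of curves or lengthening of words can make it small. The repair is to change the centres. Replace $z_j$ by a finite $\varepsilon$-net $\{z_{j,k}\}_{k\le K}$ of the plaque through $z_j$; these points lie in $Z$ by saturation, and $K=K(\varepsilon,\mathcal U)$ does not depend on $n_j$. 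The Carath\'eodory sums are then only multiplied by $K$.

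The radius is also handled in the wrong direction. You only know $w'\in B^{G_{\mathcal U}}_{n_j}(z_j,\varepsilon)$, so words of length between $n_j$ and $n_j+C_0$ are uncontrolled. ``Applying the lemma with $n_j+C_0$ in place of $n_j$'' is therefore not available. Replacing $N$ by $N+C_0$ makes every $n_j$ larger, but it does not put $w'$ in $B^{G_{\mathcal U}}_{n_j+C_0}(z_j,\varepsilon)$.

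Your ``converse direction'' at scale $d_{\mathcal U}$ is also false for a fixed atlas. $d_{\mathcal U}$ is the maximal plaque diameter. The number of plaques needed to shadow a leaf curve of length $L$ is about $L/\lambda_{\mathcal U}$, where $\lambda_{\mathcal U}<d_{\mathcal U}$ is a leafwise Lebesgue number of the plaque cover. For example, take flow boxes of length $d$ in which consecutive boxes overlap along half their length. Then words of length $n$ reach only leaf length about $nd/2$, and for a hyperbolic flow some points of $B^{G_{\mathcal U}}_n(z,\varepsilon)$ are separated by more than $C\varepsilon$ before leaf length $nd$, for any fixed $C$.

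What the argument actually gives is $w\in B_{r_j}(z_{j,k},C\varepsilon)$ with $r_j=n_j\lambda_{\mathcal U}-O(d_{\mathcal U})$. The plaque segment has to be \emph{subtracted} from the radius, not added. This is harmless in $\sum_j e^{-sr_j}$, since it costs only a bounded factor $e^{O(s\,d_{\mathcal U})}$. It does, however, yield the bound $h_{Z_{\mathcal U}}(G_{\mathcal U},G_{\mathcal U,1})/\lambda_{\mathcal U}$ rather than division by $d_{\mathcal U}$. To get down to $d_{\mathcal U}$ you need refined atlases with $\lambda_{\mathcal U}/d_{\mathcal U}\to 1$, as in the $(1-\eta)$ device of Lemma~\ref{le:ball-comparison2}.
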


\begin{proof}[Proof of Claim~\ref{cl:1}]
 Let $x\in Z$. Since $X_{\mathcal U}$ is a complete transversal, there exists a point $y\in X_{\mathcal U}$ lying in the same local plaque as $x$, with leafwise distance bounded by $C_0d_{\mathcal U}$. Since $Z$ is saturated and $x\in Z$, we have $y\in Z$. Hence
$ 
y\in Z_{\mathcal U}.
$ 
By \eqref{eq:cover}, there exists $j\in J$ such that
$$
y\in B_{n_j}^{G_{\mathcal U}}(z_j,\varepsilon).
$$
Using again the comparison between holonomy dynamical balls and foliated dynamical balls, and adding the plaque segment from $y$ to $x$, we get
$ 
x\in B_{(n_j+C_0)\,d_{\mathcal U}}(z_j,C\varepsilon).
$
\end{proof}

\smallskip

For each $j\in J$, define
$ 
r_j=(n_j+C_0)d_{\mathcal U}.
$ 
Taking $N$ large enough, we may assume that $r_j\geq r_0$ for every $j\in J$. So, the previous family is an admissible cover of $Z$ in the sense of the definition of $M_Z(s,C\varepsilon,r_0)$. Therefore,
$$M_Z(s,C\varepsilon,r_0) \,\leq\,
\sum_{j\,\in\, J}\,e^{-s\,r_j} \,=\,\sum_{j\,\in\, J}\,
e^{-s\,(n_j+C_0)\,d_{\mathcal U}} \,=\,
e^{-s\,C_0\,d_{\mathcal U}} \,\sum_{j\,\in\, J}\,
e^{-s \,d_{\mathcal U}\,n_j}.$$
Since $a < s\, d_{\mathcal U}$, we have
$$
M_Z(s,C\varepsilon,r_0) \,\leq\,
e^{-s\,C_0\,d_{\mathcal U}}\,
\sum_{j\,\in\, J}\,e^{-a\, n_j}\,
<\,\eta.$$
As $\eta>0$ is arbitrary, we conclude that
$ 
M_Z(s,C\varepsilon)=0,
$ 
and so
$$
h_{\mathrm{CP}}(Z,\mathcal F,C\varepsilon)\leq s.
$$
Letting $\varepsilon$ tend to zero one concludes that
$ 
h_{\mathrm{CP}}(Z,\mathcal F)\leq s.
$ 
Since $s$ is arbitrary, this yields
$$
h_{\mathrm{CP}}(Z,\mathcal F)
\,\leq\,
\sup_{\mathcal U}
\left\{
\frac{
h_{Z_{\mathcal U}}(G_{\mathcal U},G_{\mathcal U,1})
}
{\operatorname{diam}(\mathcal U)}
\right\},
$$
which completes the proof of equation ~\eqref{eq:equalitythmA}.

\smallskip

Assume now, in addition, that $Z$ is Borel. Then
$Z_{\mathcal U}=Z\cap X_{\mathcal U}$ is a Borel subset of
$X_{\mathcal U}$. Hence, by the variational principle for finitely
generated pseudogroups \cite[Theorem~B]{BCV},
$$
h_{Z_{\mathcal U}}(G_{\mathcal U},G_{\mathcal U,1})
\,=\,
\sup_{\substack{
\nu\,\in\,\mathcal P(X_{\mathcal U})\\
\nu(Z_{\mathcal U})\,=\,1
}}
\,\underline h_\nu(G_{\mathcal U},G_{\mathcal U,1}).
$$
Inserting this identity into the equation~\eqref{eq:equalitythmA} gives
$$
h_{\mathrm{CP}}(Z,\mathcal F)
\,=\, 
\sup_{\mathcal U}
\sup_{\substack{
\nu\,\in\,\mathcal P(X_{\mathcal U})\\
\nu(Z_{\mathcal U})\,=\,1
}}
\,\left\{
\frac{
\underline h_\nu(G_{\mathcal U},G_{\mathcal U,1})
}
{\operatorname{diam}(\mathcal U)}
\right\}.
$$

Finally, taking $Z=M$, we have $Z_{\mathcal U}=X_{\mathcal U}$ for every
finite nice foliated atlas $\mathcal U$. The previous formulas yield
$$
h_{\mathrm{CP}}(M,\mathcal F)
\,=\,
\sup_{\mathcal U}
\left\{
\frac{
h_{X_{\mathcal U}}(G_{\mathcal U},G_{\mathcal U,1})
}
{\operatorname{diam}(\mathcal U)}
\right\}
\,=\,
\sup_{\mathcal U}
\sup_{\nu\,\in\,\mathcal P(X_{\mathcal U})}
\,\left\{
\frac{
\underline h_\nu(G_{\mathcal U},G_{\mathcal U,1})
}
{\operatorname{diam}(\mathcal U)}
\right\}.
$$
\hfill $\square$

\section{ Harmonic measures}\label{sec:harmonicm}

Throughout this section, we assume that $\mathcal F$ is sufficiently
regular so that the leafwise Laplacian $\Delta_{\mathcal F}$ and the
leafwise heat diffusion are well defined (e.g. $\mathcal F$ of class $C^2$).
\smallskip

Recall that a Borel probability measure $m\in \mathcal P(M)$ on a foliated Riemannian manifold is called \emph{harmonic} if it is invariant under the leaf heat flow. More precisely, on a foliated manifold $(M, \mathcal{F})$, for every $p \in M$ the leaf $L_{p}$ inherits a Riemannian metric from the ambient manifold; the leaf Laplacian, $\Delta _{\mathcal{F}}$, acts on smooth functions by taking the intrinsic Laplacian restricted to the leaf passing through each point, that is,
$$\Delta _{\mathcal{F}}\,f(p)\,=\,\Delta _{L_{p}}(f|{}_{L_{p}})(p).$$
A Borel probability measure $m$ is harmonic if, for all compactly supported $C^2$ differentiable functions $f\colon M \to \mathbb{R}$, one has
$$\int _{M}\Delta _{\mathcal{F}}\,f\,d\mu \,=\,0.$$

\subsection{Tame harmonic measures} 

In~\cite{Garnett}, Garnett proved that harmonic measures on foliated manifolds always exist and are fixed points of diffusion operators. By Rohlin's disintegration theorem \cite{Rohlin}, any Borel probability measure on the manifold can be disintegrated on foliated boxes, giving rise to leaf probability measures and a probability measure on transversals. Harmonic measures turn out to have the following special disintegration on foliated boxes.

\begin{proposition}\label{prop:disintegrationharm} \cite[Proposition~5.2]{Candel}
A  measure $m \in \mathcal P(M)$ is harmonic if and only if it admits a decomposition on any given foliated box $D\times X$ satisfying the following condition: if $\varphi \colon M \to \mathbb R$ is a continuous function with support in $D\times X$, then 
\begin{equation}\label{eq:disintegrationHarmonic}
\int_M \varphi \; dm \,= \,\int_X \;  \Big(  \int_{D\times\{x\}}  \varphi(p,x) \,h_x(p) \,d\Leb_x(p) \Big)\; d\nu(x)
\end{equation}
where $\nu$ is a Borel probability measure on the transversal $Z$, $h_x(\cdot)$ is $\Delta^*$-harmonic on $D\times\{x\}$ for $\nu$-almost every $x\in X$, $\Delta^*$ is the adjoint operator of $\Delta$ and $\Leb_x$ denotes the volume density on the plaque $D\times \{x\}$.
\end{proposition}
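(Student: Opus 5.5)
The plan is to prove the two implications separately. The easy direction is ``decomposition $\Rightarrow$ harmonic'', which follows from integration by parts on plaques. The converse is obtained by disintegrating $m$ via Rohlin's theorem \cite{Rohlin} and then applying elliptic regularity (Weyl's lemma) on each plaque. Throughout, $\mathcal F$ is assumed $C^2$ as in this section, so the leafwise Laplacian has coefficients depending continuously on the transverse parameter.

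\emph{Decomposition implies harmonic.} Fix a finite nice foliated atlas and a partition of unity $(\rho_i)$ subordinate to it, with each $\rho_i$ continuous and leafwise $C^2$. For a test function $f$, write $\Delta_{\mathcal F} f=\sum_i \Delta_{\mathcal F}(\rho_i f)$, so it is enough to treat $f$ with support in a single box $D\times X$. For such $f$, \eqref{eq:disintegrationHarmonic} gives
$$\int_M \Delta_{\mathcal F} f\, dm=\int_X\Big(\int_{D\times\{x\}}\Delta f(p,x)\,h_x(p)\,d\Leb_x(p)\Big)d\nu(x).$$
Since $f(\cdot,x)$ has compact support in the open plaque, Green's identity moves the Laplacian onto $h_x$, turning the inner integral into $\int f\,\Delta^*h_x\,d\Leb_x$. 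This vanishes for $\nu$-almost every $x$, hence $\int_M\Delta_{\mathcal F}f\,dm=0$.

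\emph{Harmonic implies decomposition.} On a box $D\times X$, Rohlin's theorem gives $m|_{D\times X}=\int_X m_x\,d\nu(x)$, where each $m_x$ is a finite measure on the plaque $D\times\{x\}$ and $\nu$ is the projected measure. Test harmonicity against products $f(p,x)=\varphi(p)\psi(x)$ with $\varphi\in C^2_c(D)$ and $\psi\in C(X)$. This yields
$$\int_X\psi(x)\Big(\int\Delta_x\varphi\,dm_x\Big)d\nu(x)=0 \quad\text{for all }\psi,$$
so $\int\Delta_x\varphi\,dm_x=0$ for $\nu$-almost every $x$. Choose a countable family $\{\varphi_k\}$ dense in $C^2_c(D)$ for the $C^2$ topology on compacta, and discard a single $\nu$-null set. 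Then for $\nu$-almost every $x$ we have $\int\Delta_x\varphi\,dm_x=0$ for all $\varphi\in C^2_c(D)$, i.e. $\Delta_x^*m_x=0$ in the sense of distributions. The operator $\Delta_x^*$ is elliptic with $C^1$ (leafwise smooth enough) coefficients, so Weyl's lemma / elliptic regularity gives $m_x=h_x\,\Leb_x$ with $h_x$ a genuine $\Delta^*$-harmonic density. Integrating against $\varphi$ then gives \eqref{eq:disintegrationHarmonic}. Consistency on overlapping boxes follows from essential uniqueness of Rohlin disintegrations, with the densities agreeing up to the transverse normalization.

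\emph{Main obstacle.} The delicate step is the passage from ``$\nu$-almost every $x$, for each $\varphi$'' to ``$\nu$-almost every $x$, for all $\varphi$''. This needs two things:
\begin{enumerate}
\item[(i)] a single countable dense family controlling $\Delta_x\varphi$ uniformly in $x$, which uses continuity in $x$ of the leafwise metric coefficients;
\item[(ii)] measurability of $x\mapsto h_x$, which I would obtain by writing $h_x$ as a limit of mollified averages of $m_x$.
\end{enumerate}
The regularity of the densities is the point at which the $C^2$ hypothesis on $\mathcal F$ is genuinely used.
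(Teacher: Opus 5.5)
Your proposal is correct and takes essentially the same approach as the source the paper relies on: the paper gives no proof of its own and cites Candel's Proposition~5.2, which rests on the same standard Garnett--Candel argument you give (Rohlin disintegration on a foliated box, testing harmonicity against product functions, a countable density argument, and Weyl's lemma plaque by plaque, with Green's identity on plaques for the converse). Two small remarks: the uniformity in $x$ you worry about in (i) is not needed, since for each fixed $x$ the coefficients of $\Delta_x$ are bounded on compact subsets of $D$; and if harmonicity is tested only against globally $C^2$ functions (as in Section~\ref{sec:harmonicm}), you should take $\psi\in C^2_c(X)$ rather than $\psi\in C(X)$, which still gives that $x\mapsto\int\Delta_x\varphi\,dm_x$ vanishes $\nu$-almost everywhere.
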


In particular, harmonic measures induce transverse measures in foliated
boxes. Conversely, transverse invariant measures give rise to harmonic
measures with constant leafwise densities. However, an arbitrary transverse probability measure may not be induced by a harmonic measure, as stated in Theorem~\ref{thm-non-liftable}.

\smallskip

There is a special class of harmonic measures for which the leaf harmonic densities $h_z$ are constant along the leaves. If this is the case, it is not hard to check that the harmonic measure induces a probability measure on transversals that is invariant by the holonomy pseudogroup. This class of harmonic measures suggests the following notion.

\begin{definition}\label{def:tame-harmonic}
\emph{We say that a harmonic measure $m$ is \emph{tame} if for every nice foliated atlas $\mathcal U$ there exist positive constants $\alpha_{_\mathcal U}, \,\beta_{_\mathcal U} > 0$ such that the harmonic densities $h_x$ defined by ~\eqref{eq:disintegrationHarmonic} satisfy 
$$\alpha_{_\mathcal U}\,\leq \, h_x \,\le\, \beta_{_\mathcal U}\quad \quad \,\text{for $\nu$-almost every $x \in X$}.$$}
 \end{definition}

\section{Local metric entropy}\label{sec:measure-geometric-entropy-F}

The next lemma states that the local metric entropy 
\begin{equation*}
    h_\mu(M,\cF,\cU)=\int_{X_\cU} h_\mu(M,\mathcal F,x)\, d\nu_{X_\cU}(x)
\end{equation*} 
of a probability measure $\mu$ on $M$ with respect to an atlas $\cU$ is independent of the transverse section.  

\begin{lemma}\label{le:changetransv}
Let $\mu$ be a probability measure on $M$ and  $\mathcal U$ be a finite foliated atlas. Consider two complete transversals $X_{\mathcal U}$ and $X'_{\mathcal U}$ associated with $\mathcal U$, and let $\nu$ and $\nu'$ be the transverse probability measures induced by the disintegration of $\mu$ on $X_{\mathcal U}$ and
$X'_{\mathcal U}$, respectively. Then
$$
\int_{X_{\mathcal U}} h_\mu(M,\mathcal F,x) \,d\nu(x)
\,=\,
\int_{X'_{\mathcal U}} h_\mu(M,\mathcal F,x)\,d\nu'(x).
$$
\end{lemma}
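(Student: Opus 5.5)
The plan rests on two facts. First, the pointwise local entropy $x\mapsto h_\mu(M,\mathcal F,x)$ is constant along plaques, and in fact along leaves. Second, the two transverse measures $\nu$ and $\nu'$ are both obtained by pushing forward a single measure on the plaque space of $\mathcal U$. Once both are in hand, both integrals equal the integral of one function on the plaque space against one measure.

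\emph{Step 1 (leafwise invariance of the local entropy).} Let $x,y$ lie in the same plaque with leafwise distance $t$. I will show that for every $r>0$ and every small $\vep>0$ there is $\vep'=\vep'(\vep)$, with $\vep'\to0$ as $\vep\to 0$, such that
$$B_{r+t}(x,\vep)\subset B_r(y,\vep')\qquad\text{and}\qquad B_{r+t}(y,\vep)\subset B_r(x,\vep').$$
The reason is that any leaf curve of length at most $r$ starting at $y$ can be preceded by the plaque segment from $x$ to $y$. This produces a curve of length at most $r+t$ starting at $x$, whose orthogonal projections are controlled by $\delta_{r+t}$. One then uses uniform continuity of the projections $\Pi$ on the compact manifold, as in the argument of \cite[Lemme~3.7]{GLW} already used in the proof of Theorem~\ref{thm:entropy-holonomy}. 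Since $\frac{1}{r}$ and $\frac{1}{r+t}$ are asymptotically equal, taking $-\frac1r\log\mu(\cdot)$, then $\liminf_{r\to\infty}$, then $\vep\to0^+$ gives $h_\mu(M,\mathcal F,x)=h_\mu(M,\mathcal F,y)$. The monotonicity of the balls in $\vep$ makes the limit in $\vep$ unaffected by the change $\vep\to\vep'$.

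\emph{Step 2 (identifying $\nu$ and $\nu'$).} For each box $U_i$, the local transversals $T_i\subset X_{\mathcal U}$ and $T_i'\subset X'_{\mathcal U}$ both meet each plaque of $U_i$ exactly once (Definition~\ref{def:transversals}). Hence sliding along plaques gives a homeomorphism $\pi_i\colon T_i\to T_i'$. By the disintegration \eqref{eq:harmU1} (Rohlin's theorem, Proposition~\ref{prop:disintegrationharm} in the harmonic case), the transverse measure on $T_i$ is the image of $\mu|_{U_i}$ under the quotient map $U_i\to\{\text{plaques of }U_i\}$, identified with $T_i$, suitably normalized. The same holds for $T_i'$ with the same normalization. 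Therefore $\nu'|_{T_i'}=(\pi_i)_*(\nu|_{T_i})$, and summing over $i$ gives $\nu'=\pi_*\nu$ with $\pi=\bigsqcup_i\pi_i$.

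\emph{Step 3 (conclusion).} By Step 1, $h_\mu(M,\mathcal F,\pi(x))=h_\mu(M,\mathcal F,x)$. The change of variables formula then yields
$$\int_{X'_{\mathcal U}} h_\mu(M,\mathcal F,\cdot)\,d\nu'=\int_{X_{\mathcal U}} h_\mu(M,\mathcal F,\pi(x))\,d\nu(x)=\int_{X_{\mathcal U}} h_\mu(M,\mathcal F,x)\,d\nu(x).$$
This uses measurability of $x\mapsto h_\mu(M,\mathcal F,x)$. Measurability follows because $\mu(B_r(x,\vep))$ is semicontinuous in $x$, and the limits can be taken along countable sequences by monotonicity.

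The main obstacle is Step 1, namely the precise comparison of dynamical balls after moving the center along a plaque. The radius $\vep$ must be controlled uniformly in $r$ while the curve is lengthened by $t$. I would handle this using the local uniform continuity of the holonomy projections within a box of diameter $d_{\mathcal U}$, together with $\vep<\vep_0$. A secondary subtlety is the normalization in Step 2 when the boxes overlap. I will fix the convention that $\nu$ is defined box by box from the same restrictions of $\mu$, so that it is the same for both transversals.
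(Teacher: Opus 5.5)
Your three steps are the paper's own route: slide along plaques to get $\nu'=\pi_*\nu$, prove $h_\mu(M,\mathcal F,\pi(x))=h_\mu(M,\mathcal F,x)$ by time-shifted ball inclusions, then change variables. However, Step~1 fails as stated. The paper's inclusions $B_{r+C}(x,\delta)\subset B_r(H(x),\varepsilon)$ with $C=\operatorname{diam}\mathcal U$ fail for the same reason. If $x\neq y$, the inclusion $B_{r+t}(x,\varepsilon)\subset B_r(y,\varepsilon')$ is false: $x\in B_{r+t}(x,\varepsilon)$ since $D_{r+t}(x,x)=0$, whereas $D_r(y,x)\geq d_M(x,y)$, which does not shrink with $\varepsilon$. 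Dynamical balls sit near their centres.

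Prepending the plaque segment from $x$ to $y$ gives something weaker. Let $\Phi$ push each point along its own plaque, say by a translation in the plaque coordinates of the box. Then the slide $\Phi(B_{r+Kt}(x,\varepsilon))$ lies in $B_r(y,\varepsilon')$, with $K$ depending only on the atlas. To get an entropy statement you must then compare $\mu(A)$ with $\mu(\Phi(A))$, and for an arbitrary $\mu$ nothing relates them.

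In fact the conclusion of Step~1 is false in that generality. For $\mu=\delta_p$ one has $h_\mu(M,\mathcal F,p)=0$. But $h_\mu(M,\mathcal F,y)=+\infty$ for every $y\neq p$ on the plaque of $p$, because $\mu(B_r(y,\varepsilon))=0$ as soon as $\varepsilon<d_M(y,p)$. Choose the local transversals of $X_{\mathcal U}$ through $p$ and those of $X'_{\mathcal U}$ away from $p$. The two integrals are then $0$ and $+\infty$, so the statement itself needs a hypothesis on $\mu$.

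The missing ingredient is regularity of the conditional measures along plaques. Suppose $\mu$ is a tame harmonic measure, which is the only case used in Theorem~\ref{thm:harmonic-measure-entropy}. Combine \eqref{eq:disintegrationHarmonic}, the bounds $\alpha_{\mathcal U}\leq h_x\leq\beta_{\mathcal U}$ of Definition~\ref{def:tame-harmonic}, and the bounded leafwise Jacobian of $\Phi$. These give $\mu(\Phi(A))\geq c\,\mu(A)$ for some $c>0$ independent of $A$. Hence $\mu(B_r(y,\varepsilon'))\geq c\,\mu(B_{r+Kt}(x,\varepsilon))$ with $c$ independent of $r$ and $\varepsilon$. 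The constant disappears after taking $\frac1r\log$ and $r\to+\infty$. With the symmetric estimate you get $h_\mu(M,\mathcal F,x)=h_\mu(M,\mathcal F,y)$, and your Steps~2 and~3 then go through unchanged. For a general harmonic measure, Harnack's inequality for the positive leafwise densities on a slightly larger distinguished chart should play the role of tameness.
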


\begin{proof}
As $X_{\mathcal U}$ decomposes into finitely many holonomy domains, we may assume that there is a well-defined holonomy map $H \colon X_{\mathcal U}\to X'_{\mathcal U}.$ 
By Rohlin's disintegration theorem,
for $\mu$-almost every $x$ there exist probability measures $\gamma_x, \gamma_x'$ supported on the plaque $P_x$ such that 
$$
\mu_{\mid_U}
\,=\,
\int_{X_{\mathcal U}}\gamma_x\,d\nu(x)
\,=\,
\int_{X'_{\mathcal U}}\gamma'_z\,d\nu'(z).
$$
In particular, if $A\subset X'_{\mathcal U}$ is a Borel set and
denote by $\operatorname{Sat}(A)= \bigcup_{x\,\in \,A} P_x$
the measurable foliated sets of 
obtained by its saturation by plaques inside the
corresponding foliated boxes, then
$ 
\operatorname{Sat}(A)=\operatorname{Sat}(H^{-1}(A)).
$ 
Using the two disintegrations of $\mu$,
$
\nu'(A)
=
\mu(\operatorname{Sat}(A))
=
\mu(\operatorname{Sat}(H^{-1}(A)))
=
\nu(H^{-1}(A))
$ 
and, consequently,
\begin{equation}\label{eq:reducH}
\int_{X_\cU'} h_\mu(M,\mathcal F,x)\, d\nu'(x)
=
\int_{X_\cU} h_\mu(M,\mathcal F,H(x))\, d\nu(x).    
\end{equation}
for every Borel subset $A\subset X_\cU'$.

\smallskip
\begin{claim}\label{cl:3}
\emph{$h_\mu(M,\mathcal F,H(x)) = h_\mu(M,\mathcal F,x)$ at $\nu$-almost every $x \in X_U$.}  
\end{claim}

\begin{proof}[Proof of Claim~\ref{cl:3}.]
Fix a point $x$ in a holonomy domain. Since $x$ and $Hx$ lie on the same plaque and the atlas is finite, there exists a constant $C>0$, independent
of $x$ ($C=\diam \mathcal U >0$) such that $x$ and $H(x)$ can be joined inside the leaf by a curve of
leaf length at most $C$. Moreover, by the uniform continuity of the finitely many local holonomy maps, for every small $\vep>0$ there exists $0 < \delta = \delta_\vep < \vep$ such that, for every $r>0$,
\begin{equation}\label{eq:inclusoesballs}
B_{r+C}(x,\delta)
\subset
B_r(H(x),\vep)
\quad\text{and}\quad
B_{r+C}(H(x),\delta)
\subset
B_r(x,\varepsilon).
\end{equation}
The second inclusion in ~\eqref{eq:inclusoesballs} ensures that 
\begin{align*}
\liminf_{r\,\to\,+\infty} \,-\frac{1}{r} \,\log 
\mu(B_{r}(H(x),\delta))
& \,\leq\,\, \liminf_{r\,\to\,+\infty}
-\frac{1}{r}\,
\log \mu(B_{r-C}(x,\vep)) \\
& \,=\,\, \liminf_{r\,\to\,+\infty}
-\frac{r-C}{r}
\frac{1}{r-C}\,
\log \mu(B_{r-C}(x,\vep)) \\
&\, =\,\, \liminf_{r\,\to\,+\infty}
-\frac{1}{r-C}\,
\log \mu(B_{r-C}(x,\vep)) \\
& \,=\,\,
\liminf_{R\,\to\,+\infty}
-\frac1R\, \log \mu(B_R(x,\vep)).\\
\end{align*}
Taking $\vep\to0^+$ (so $\delta\to0^+$),  we obtain
$
h_\mu(M,\mathcal F,H(x))
\leq
h_\mu(M,\mathcal F,x).
$
Applying the same argument to the inverse holonomy map $H^{-1}$ we obtain the converse inequality. Thus,
$$
h_\mu(M,\mathcal F,H(x))\, = \,h_\mu(M,\mathcal F,x)
$$ 
for $\nu$-almost every $x\in X_{\mathcal U}$.
\end{proof}

\bigskip

Since $\nu'=H_*\nu$, from Claim~\ref{cl:2} we obtain
\begin{equation}\label{eq:H}
\int_{X'_{\mathcal U}} h_\mu(M,\mathcal F,z)\,d\nu'(z)
=
\int_{X_{\mathcal U}} h_\mu(M,\mathcal F,H(x))\,d\nu(x)
=
\int_{X_{\mathcal U}} h_\mu(M,\mathcal F,x)\,d\nu(x).
\end{equation}
Lemma~\ref{le:changetransv} now follows directly from \eqref{eq:H} together with \eqref{eq:reducH}.
\end{proof}
\smallskip

\begin{remark}\label{ondefharmonicmu}
\emph{The previous argument also shows that the function
$x\mapsto h_\mu(M,\mathcal F,x)$ is invariant under local holonomy maps. Thus, the quantity
$ 
h_\mu(M,\mathcal F,\mathcal U)
=
\int_{X_{\mathcal U}} h_\mu(M,\mathcal F,x)\,d\nu_{\mathcal U}(x)
$ 
depends only on the atlas $\mathcal U$ and not on the particular complete  transversal chosen to represent it.}
\end{remark}


\section{Proof of Theorem~\ref{thm-non-liftable}}\label{se:thmE}

Let us now discuss the relation between measures on transverse sections and harmonic measures.

\subsection{Induced measures on transverse sections}
It is known that transversal invariant probability measures embed naturally in the space of harmonic measures, since the constant functions are leaf harmonic (cf. Corollary~4.7.10 in \cite{Walczak}). It is natural to ask which Borel probability measures can be induced by harmonic measures on sections transversal to the foliation.

\subsection{Liftability of harmonic measures}

Given a finite nice foliated atlas, every harmonic measure induces a probability measure on the associated complete transversal. However, the converse is false in general: an arbitrary transverse probability measure may not arise from a harmonic probability measure on the foliated manifold. Let us see an example.
\smallskip

Consider the flat torus $M=\mathbb T^2=\mathbb R^2/\mathbb Z^2$ and let $\mathcal F$ be the linear foliation generated by the constant unit vector field
$$
X=
\frac{1}{\sqrt{1+\alpha^2}}
\left(
\frac{\partial}{\partial x}
+
\alpha\frac{\partial}{\partial y}
\right),
$$
where $\alpha\notin\mathbb Q$. The leaves of $\mathcal F$ are dense, and so $\mathcal F$ is minimal.
Moreover, $\Sigma=\{x=0\}\cong\mathbb S^1$ is a global section transversal to the foliation $\mathcal F$. The holonomy group on $\Sigma$ is generated by the first return Poincar\'e map, which is the irrational rotation
$$R_\alpha(y)=y+\alpha \mod 1.$$

\begin{claim}\label{cl:2}
\emph{The only harmonic probability measure for $\mathcal F$ is the Haar measure on $\mathbb T^2$.}
\end{claim}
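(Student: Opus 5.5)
We will show that any harmonic probability measure $m$ for the linear foliation on $\mathbb T^2$ coincides with the Haar measure. We use Proposition~\ref{prop:disintegrationharm} together with the fact that leaves are one-dimensional and flat, and then the unique ergodicity of the irrational rotation $R_\alpha$.

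\emph{Step 1: leafwise densities are affine.} Each leaf is a line parametrized by arc length $t$. The leaf Laplacian is $\Delta_{\mathcal F} f = X(X f)$, i.e.\ $f''$ along the leaf. It is self-adjoint with respect to the leafwise Lebesgue measure, since the Riemannian volume on a flat leaf is $dt$. So $\Delta^*$-harmonic densities $h_x$ in a foliated box $D\times X$ satisfy $h_x''=0$ on each plaque. Hence $h_x(t)=a(x)+b(x)t$ for measurable $a,b$. An alternative that avoids coordinates: for $f\in C^2(\mathbb T^2)$ the harmonicity condition reads $\int X(Xf)\,dm=0$.

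\emph{Step 2: the linear term vanishes globally.} This is the step I expect to be the main obstacle. Plaque densities in overlapping boxes must glue consistently along each leaf. The plaques of the leaf through $y\in\Sigma$ concatenate to the whole line $\{(t,y+\alpha t)\}$, so the density along the leaf extends to a global affine function $a+bt$ on $\mathbb R$. It must be nonnegative on all of $\mathbb R$, which forces $b=0$. The difficulty is that densities are only defined up to the transverse normalization in each box. I would handle this by working directly with the distributional equation instead. Take test functions $f(x,y)=\phi(x)\psi(y)$ with $\phi,\psi$ trigonometric. Then $X(Xf)$ is a combination of Fourier modes. Harmonicity yields $(k+\alpha l)^2\hat m(k,l)=0$ for all $(k,l)\in\mathbb Z^2$, by choosing $f=e^{2\pi i(kx+ly)}$, for which $XXf=-\frac{4\pi^2(k+\alpha l)^2}{1+\alpha^2}f$. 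Since $\alpha\notin\mathbb Q$, we have $k+\alpha l\neq0$ unless $(k,l)=(0,0)$. Therefore $\hat m(k,l)=0$ for all nonzero $(k,l)$, and $\hat m(0,0)=1$.

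\emph{Step 3: conclusion.} A probability measure on $\mathbb T^2$ whose Fourier coefficients vanish off the origin is the Haar measure, by uniqueness of Fourier–Stieltjes coefficients. Thus $m=\Leb_{\mathbb T^2}$. This Fourier computation in Step~2 actually makes Step~1 unnecessary, so the final proof would consist of Step~2's Fourier argument and Step~3. The remaining point to verify is that the smooth functions $e^{2\pi i(kx+ly)}$, taking real and imaginary parts, are admissible test functions in Definition~\ref{def:harmonic}; they are, being $C^\infty$ on $M$. As a consequence, the measure induced on $\Sigma$ is Lebesgue measure on $\mathbb S^1$, consistent with the unique ergodicity of $R_\alpha$. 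Any other $\nu\in\mathcal P(\Sigma)$, for instance a Dirac mass, then witnesses Theorem~\ref{thm-non-liftable}.
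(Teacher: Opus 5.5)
Your final argument is correct and is essentially the paper's proof: you test harmonicity against the characters $e^{2\pi i(kx+ly)}$ to get $(k+\alpha l)^2\hat m(k,l)=0$, use $\alpha\notin\mathbb Q$ to kill every nonzero coefficient, and then invoke uniqueness of Fourier--Stieltjes coefficients. As you yourself note, Step~1, the gluing discussion of affine densities, and the appeal to unique ergodicity of $R_\alpha$ are not needed and can be dropped.
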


\begin{proof}[Proof of Claim~\ref{cl:2}.]
We shall use the uniqueness of Fourier--Stieltjes coefficients on compact abelian groups. More precisely, if $\mu$ is a finite Borel measure on $\mathbb T^2$ and
$$
\widehat\mu(k)
=
\int_{\mathbb T^2}
e^{2\pi i\,(k_1x\,+\,k_2y)}\,d\mu(x,y),
\qquad
k=(k_1,k_2)\in\mathbb Z^2
$$
denotes its Fourier coefficients, then the set
$\{\widehat\mu(k)\colon k\in\mathbb Z^2\}$ uniquely determines $\mu$ (cf. \cite[Chapter~1]{RudinFourierAnalysisGroups}).
In particular, if $\mu$ is a probability measure and
$\widehat\mu(k)=0$ for every $k\neq 0$, then $\widehat\mu(0)=1$ and $\mu$ has the same Fourier coefficients as the Haar probability measure on $\mathbb T^2$, and so $\mu$ is the Haar measure. 
\smallskip

Let $\mu$ be a harmonic probability measure for $\mathcal F$. We will show that $\widehat\mu(k)=0$ for every $k\neq 0$.
Since the leaves of $\mathcal F$ are one-dimensional and the vector field $X$ has unit norm along the leaves, the leafwise Laplacian is the second derivative in the leaf direction. Thus, for every smooth function $f$,
$$
\Delta_{\mathcal F}f=X(Xf)=X^2f
$$
(here $X^2$ denotes the composition of the derivation $X$ with itself). Consider the smooth functions
$\varphi_k(x,y)=e^{2\pi i\,(k_1x\,+\,k_2y)}$ on the torus, parameterized by $k=(k_1,k_2)\in\mathbb Z^2$, 
and their second directional derivatives 
$$
X^2\varphi_k
=
-\frac{4\pi^2}{1+\alpha^2}\,
(k_1+\alpha k_2)^2 \, \varphi_k.
$$
Since $\mu$ is harmonic, 
$$
0
=
\int_{\mathbb T^2}X^2\varphi_k\,d\mu
\,=\,
-\frac{4\pi^2}{1+\alpha^2}\,
(k_1+\alpha k_2)^2
\int_{\mathbb T^2}\varphi_k\,d\mu.
$$
Since $\alpha$ is irrational, $k_1+\alpha k_2\neq0$ for every
$k\neq0$, which guarantees that
$$
\int_{\mathbb T^2}\varphi_k\,d\mu=0
\qquad
\text{for every } k\neq0.
$$
Thus, all non-zero Fourier coefficients of $\mu$ vanish, and so $\mu$ is the Haar measure on $\mathbb T^2$. 
\end{proof}

The Haar measure on $\mathbb T^2$ induces the Lebesgue measure on the transversal $\Sigma \cong \mathbb S^1$. Therefore, for any probability measure $\nu \neq \mu$ on $\Sigma$ (e.g. $\nu=\delta_0\in\mathcal P(\Sigma)$) there is no harmonic probability measure on $(M,\mathcal F)$ inducing $\nu$ on $\Sigma$. 

\section{Proof of Theorem~\ref{thm:harmonic-measure-entropy}}
\label{sec:proof-harmonic-measure-entropy}

Let $\mathcal F$ be a $C^2$ non-singular foliation of a compact Riemannian manifold $M$ with a global transversal, and let
$\mu\in\mathcal P_H^{\mathrm{tame}}(M,\mathcal F)$. Given a finite nice foliated atlas $\mathcal U$, denote by $\nu_{\mathcal U}$ the probability
measure induced by $\mu$ on the global transversal $X_{\mathcal U}$. Since
$$
h_\mu(M,\mathcal F)
=
\sup_{\mathcal U} h_\mu(M,\mathcal F,\mathcal U),
$$
in order to prove that 
$$h_\mu(M,\mathcal F) \,=\,\sup_{\mathcal U}\,\Big\{\frac{\underline{h}_{\nu_{\mathcal U}}(G_{\mathcal U},G_{\mathcal U,1})}{\operatorname{diam}(\mathcal U)}\Big\} 
$$
it is enough to show that, for every finite nice foliated atlas $\mathcal U$ of $M$, one has 
$$h_\mu(M,\mathcal F,\mathcal U)
\,=\,\,\frac{\underline{h}_{\nu_{\mathcal U}}(G_{\mathcal U},G_{\mathcal U,1})}{\operatorname{diam}(\mathcal U)}.$$


\begin{proposition}
\label{prop:2estimates}
Under the assumptions of Theorem~\ref{thm:harmonic-measure-entropy}, one has:
\begin{itemize}
\item[(a)] For every finite nice foliated atlas $\mathcal U$ of $M$, 
\begin{equation}\label{eq:geq}
h_\mu(M,\mathcal F,\mathcal U)
\,\geq\,\,\frac{\underline{h}_{\nu_{\mathcal U}}(G_{\mathcal U},G_{\mathcal U,1})}{\operatorname{diam}(\mathcal U)}.
\end{equation}

\item[(b)] For every $\zeta>0$ and any finite nice foliated atlas $\mathcal U$ of $M$, there exists a finite nice foliated atlas $\mathcal U_\zeta$ that refines $\mathcal U$ so that
$$h_\mu(M,\mathcal F,\mathcal U)
\,\leq\,\,
\frac{\underline{h}_{\nu_{\mathcal U_\zeta}}(G_{\mathcal U_\zeta},G_{\mathcal U_\zeta,1})}{\operatorname{diam}(\mathcal U_\zeta)} + \zeta.$$
\end{itemize}
\end{proposition}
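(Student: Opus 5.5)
The plan is to compare, for $\nu_{\mathcal U}$-typical $x\in X_{\mathcal U}$, the $\mu$-measure of a leafwise dynamical ball $B_r(x,\vep)$ with the $\nu_{\mathcal U}$-measure of a holonomy dynamical ball $B^{G_{\mathcal U}}_n(x,\vep')$, where $n\approx r/\operatorname{diam}(\mathcal U)$. Two ingredients do this. First, \cite[Lemme~3.7]{GLW}, already used in the proof of Theorem~\ref{thm:entropy-holonomy}, gives a constant $C\ge 1$ with
\[
\operatorname{Sat}\big(B^{G_{\mathcal U}}_{n}(x,\vep/C)\big)\cap U\subset B_{n d_{\mathcal U}}(x,\vep),
\qquad
B_{(n+C_0) d_{\mathcal U}}(x,\vep/C)\cap X_{\mathcal U}\subset B^{G_{\mathcal U}}_{n}(x,\vep).
\]
Here $\operatorname{Sat}$ denotes saturation by plaques in the boxes, as in Lemma~\ref{le:changetransv}, and $C_0$ is the constant of Claim~\ref{cl:1}. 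Second, tameness (Definition~\ref{def:tame-harmonic}) together with Proposition~\ref{prop:disintegrationharm} converts measures on the two sides. If $A\subset X_{\mathcal U}$ is Borel and $S$ is a set that, on each plaque $P_y$ with $y\in A$, contains a plaque piece of volume at least $c>0$, then
\[
\mu(S)\ \ge\ \alpha_{\mathcal U}\,c\,\nu_{\mathcal U}(A).
\]
In the other direction, if $S\cap U\subset \operatorname{Sat}(A)$, then $\mu(S\cap U)\le \beta_{\mathcal U}\,\max_P \Leb(P)\,\nu_{\mathcal U}(A)$. Because all these constants are independent of $r$, they vanish after taking $-\frac1r\log$ and letting $r\to\infty$. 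What remains is the correspondence between $r$ and $n$, and this produces the factor $1/\operatorname{diam}(\mathcal U)$.

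For part (a), fix $r$ and set $n=\lfloor r/d_{\mathcal U}\rfloor$. The first inclusion, applied to a small sub-box around $x$, should give
\[
\mu\big(B_r(x,\vep)\big)\ \ge\ \alpha_{\mathcal U}\, c\,\nu_{\mathcal U}\big(B^{G_{\mathcal U}}_{n+1}(x,\vep/C)\big),
\]
where we use that each point of the holonomy ball carries a small plaque disc, of volume at least $c$, lying inside the leafwise ball. Then
\[
-\frac1r\log\mu\big(B_r(x,\vep)\big)\ \le\ \frac{1}{d_{\mathcal U}}\cdot\frac{1}{n}\Big(-\log\nu_{\mathcal U}\big(B^{G_{\mathcal U}}_{n+1}(x,\vep/C)\big)\Big)+o(1).
\]
This pointwise bound comes out in the direction $h_\mu(M,\mathcal F,x)\le \underline h/d_{\mathcal U}$, the opposite of what \eqref{eq:geq} requires. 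To obtain (a), I will therefore use the upper density bound together with the second inclusion:
\[
\mu\big(B_r(x,\vep)\big)\ \le\ \mu\Big(\operatorname{Sat}\big(B_r(x,\vep)\cap X_{\mathcal U}\big)\Big)\ \le\ \beta_{\mathcal U}\, L\,\nu_{\mathcal U}\big(B^{G_{\mathcal U}}_{n}(x,C\vep)\big),
\]
with $n=\lfloor r/d_{\mathcal U}\rfloor - C_0$ and $L=\max_P\Leb(P)$. Taking $\liminf_r$, then $\vep\to0$, and integrating over $\nu_{\mathcal U}$ gives \eqref{eq:geq}. The first inclusion $\operatorname{Sat}(B_r(x,\vep)\cap X_{\mathcal U})\supset B_r(x,\vep)\cap U$ holds up to shrinking $\vep$, since every point of the leafwise ball projects along its plaque to a transversal point that is still $D_{r-d_{\mathcal U}}$-close.

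For part (b), I will use the lower density bound to reverse the inequality. The difficulty is that the comparison of leafwise length $r$ with the holonomy word length $n$ is not sharp. A leafwise path of length $r$ crosses roughly $r/\ell$ boxes, where $\ell$ is the minimal plaque crossing length, and $\ell$ can be much smaller than $d_{\mathcal U}$. The refinement $\mathcal U_\zeta$ addresses this. I will choose it with plaques that are nearly round and of almost constant size, so that every leaf path of length $r$ is shadowed by a chain of at most $r/(\operatorname{diam}(\mathcal U_\zeta)(1-\eta))$ plaques, with $\eta$ small in terms of $\zeta$. Then $B^{G_{\mathcal U_\zeta}}_{n}(x,\vep/C)$ with $n=r/(\operatorname{diam}(\mathcal U_\zeta)(1-\eta))$ saturates into $B_r(x,\vep)$. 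This yields $h_\mu(M,\mathcal F,\mathcal U_\zeta)\le \underline h_{\nu_{\mathcal U_\zeta}}/((1-\eta)\operatorname{diam}\mathcal U_\zeta)$. I will also need $h_\mu(M,\mathcal F,\mathcal U)\le h_\mu(M,\mathcal F,\mathcal U_\zeta)$. This should follow from the holonomy invariance of $x\mapsto h_\mu(M,\mathcal F,x)$ in Remark~\ref{ondefharmonicmu} and Lemma~\ref{le:changetransv}: both integrals equal the integral of the same saturated function against the transverse measures induced by the same $\mu$, so I will arrange them to coincide.

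I expect the main obstacle to be this geometric step: constructing refinements whose plaque-chain counting matches leafwise length up to the factor $1+\eta$, uniformly over leaves, while the constants $C$ and $C_0$ from \cite[Lemme~3.7]{GLW} remain under control.
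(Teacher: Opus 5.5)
Your argument for (a), including the switch from the lower to the upper tameness bound, follows the paper's route through Lemma~\ref{lem:foliated-to-holonomy-balls}. Your ball comparison in (b) matches Lemma~\ref{le:ball-comparison2}. The refinement geometry you flag as the main obstacle is not constructed in the paper either; it is taken from \cite[Lemme~3.11]{GLW} and \cite[Theorem~3.4.1]{Walczak}.

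The gap is the step $h_\mu(M,\cF,\cU)\le h_\mu(M,\cF,\cU_\zeta)$. Lemma~\ref{le:changetransv} and Remark~\ref{ondefharmonicmu} only compare two complete transversals of the \emph{same} atlas. By contrast, $\cU$ and $\cU_\zeta$ have different boxes and different overlaps. Use the convention of that lemma: the measure of $A\subset T_i$ is $\mu(\operatorname{Sat}(A))$, normalized on the disjoint union. Then every $f$ constant on plaques satisfies
\[
\int_{X_{\cU}} f\,d\nu_{\cU}\,=\,\frac{\int_M f\,N_{\cU}\,d\mu}{\int_M N_{\cU}\,d\mu},
\qquad N_{\cU}(p)\,=\,\#\{i\colon p\in U_i\}.
\]
Holonomy invariance of $h_\mu(M,\cF,\cdot)$ removes the dependence on the transversal, but not the weight $N_{\cU}$. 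Suppose $\mu$ is not ergodic and $h_\mu(M,\cF,\cdot)$ takes different values on two saturated sets of positive measure. Then the weighted averages for $\cU$ and $\cU_\zeta$ need not coincide, and nothing in the construction of $\cU_\zeta$ controls them. So ``both integrals equal the integral of the same saturated function'' does not give equality, and ``arrange them to coincide'' has no mechanism as stated. The paper does not argue this way. It builds $h_\mu(M,\cF,\cU)\le h_\mu(M,\cF,\cU_\zeta)+\zeta$ into the choice of $\cU_\zeta$ in Lemma~\ref{le:ball-comparison2}, by appeal to \cite[Theorem~3.4.1]{Walczak}.

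To make your route work, you have two options.
\begin{enumerate}
\item Assume $\mu$ ergodic. Then $h_\mu(M,\cF,\cdot)$ is $\mu$-a.e.\ constant, and both integrals equal that constant.
\item Fix the normalization explicitly: give $T_i$ the measure $A\mapsto\int_{\operatorname{Sat}(A)}N_{\cU}^{-1}\,d\mu$.
\end{enumerate}
In the second option, this is already a probability on $X_{\cU}$. Its density with respect to the plaque-saturation measure lies in $[1/k,1]$, where $k=\#\cU$. Hence your tameness comparisons of $\mu(B_r(x,\vep))$ with $\nu_{\cU}(B_n^{G_{\cU}}(x,\cdot))$ change only by the constant $k$, and the pointwise local entropies are unchanged. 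Moreover, $\int_{X_{\cU}}f\,d\nu_{\cU}=\int_M f\,d\mu$ for every $f$ constant on plaques. The argument of Claim~\ref{cl:3} shows that $h_\mu(M,\cF,\cdot)$ is constant on plaques. You then get $h_\mu(M,\cF,\cU)=\int_M h_\mu(M,\cF,p)\,d\mu(p)$ for every atlas, so your step holds with equality. This requires reading the statement with this choice of induced measure $\nu_{\cU}$.
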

 
\begin{proof}    
$(a)$ Fix a nice foliated atlas $\mathcal U$ and let $\nu_{\cU}$ denote the transverse probability measure induced by $\mu$ on a global transversal $X_\cU$. Recall that 
$$
h_\mu(M,\mathcal F,\mathcal U)
=
\int_{X_\cU} h_\mu(M,\mathcal F,x)\, \,d\nu_\cU
$$
where 
\begin{equation*}
h_\mu(M,\mathcal F,x) \,=\,\lim_{\vep\,\to\,0^+}\,
\liminf_{r\,\to\,+\infty}\, 
- \frac1r \,\log \mu(B_r(x,\vep)).
\end{equation*}

\begin{lemma}
\label{lem:foliated-to-holonomy-balls}
Given a finite nice foliated atlas $\mathcal U$, there exist constants $C,C_0>0$ and $\varepsilon_{\mathcal U}>0$ such that, for every
$x\in X_{\mathcal U}$, every $r>0$ and every
$0<\varepsilon<\varepsilon_{\mathcal U}$, if
$ 
n=\lfloor \frac{r}{\operatorname{diam}(\mathcal U)}\rfloor
$ 
then
$$
B_r(x,\varepsilon)\,
\subset
\bigcup_{y\,\in\, B_n^{G_{\mathcal U}}(x,C\varepsilon)}
B_{\mathcal F}^{\mathrm{loc}}(y,C_0\,\varepsilon),
$$
where $B_{\mathcal F}^{\mathrm{loc}}(y,C_0\,\varepsilon)$ stands for the 
leafwise ball inside the local plaque containing $y$, centered at $y$ and with radius $C_0\,\varepsilon$.
\end{lemma}

\begin{proof}
Since $\mathcal U$ is finite and nice, there are constants $C_0>0$ and $\varepsilon_{\mathcal U}>0$ such that, whenever
$d_M(z,X_{\mathcal U})<\varepsilon_{\mathcal U}$, the local plaque through $z$ meets $X_{\mathcal U}$ in a unique point $y$ and
$$ 
d_{\mathcal F}^{\mathrm{loc}}(y,z)\,\leq\, C_0\,d_M(y,z).
$$
Taking a smaller $\varepsilon_{\mathcal U}$ if necessary, we may assume that this estimate applies to every $z$ considered below.

\smallskip
Fix $x\in X_{\mathcal U}$ and let $z\in B_r(x,\varepsilon)$, where 
$0<\varepsilon<\varepsilon_{\mathcal U}$. Since
$D_r(x,z)<\varepsilon$, in particular $d_M(x,z)<\varepsilon$. Let $y\in X_{\mathcal U}$ be the intersection of the local plaque through $z$ with the local transversal. By the comparison between leafwise dynamical balls and holonomy dynamical balls in \cite[Lemme~3.7]{GLW} (see also
\cite[Theorem~3.4.1]{Walczak}), it follows that 
$ 
z\in B_{\mathcal F}^{\mathrm{loc}}(y,C_0\,\varepsilon).
$ 

\smallskip
It remains to show that $y\in B_n^{G_{\mathcal U}}(x,C\varepsilon)$,
where
$ 
n=\lfloor\frac{r}{\operatorname{diam}(\mathcal U)}\rfloor.
$ 
Let $g\in G_{\mathcal U,n}$ be an admissible holonomy word defined at
$x$. This word is represented by a chain of at most $n$ plaques, and hence
by a leafwise curve of length at most
$n\operatorname{diam}(\mathcal U)$, where $n\operatorname{diam}(\mathcal U)\leq r.$ 
Since $D_r(x,z)<\varepsilon$, the corresponding plaques through $z$ stay
uniformly close to those through $x$ up to this length. By the uniform
continuity of the finitely many local holonomy maps, there exists $C>0$, depending only on $\mathcal U$, such that
$ 
d(g(x),g(y))<C\varepsilon
$ 
for every admissible $g\in G_{\mathcal U,n}$. Consequently, 
$ 
y\in B_n^{G_{\mathcal U}}(x,C\varepsilon).
$
\end{proof}

\smallskip
Let us prove item (a) of Proposition~\ref{prop:2estimates}.
Fix a finite nice foliated atlas $\mathcal U$ and let $\nu_{\mathcal U}$ denote the transverse probability measure induced by $\mu$ on $X_{\mathcal U}$. Given $x\in X_{\mathcal U}$, $r>0$ and
$0<\varepsilon<\varepsilon_{\mathcal U}$, set
$ 
n=\lfloor \frac{r}{\operatorname{diam}(\mathcal U)}\rfloor
$ 
as before. By Lemma~~\ref{lem:foliated-to-holonomy-balls},
$$
B_r(x,\varepsilon)
\,\subset
\bigcup_{y\,\in\, B_n^{G_{\mathcal U}}(x,C\varepsilon)}\,
B_{\mathcal F}^{\mathrm{loc}}(y,C_0\,\varepsilon).
$$
Using the disintegration formula 
~\eqref{eq:disintegrationHarmonic} and the tameness of $\mu$, we obtain
$$
\begin{aligned}
\mu(B_r(x,\varepsilon))
&\,\leq\,
\int_{B_n^{G_{\mathcal U}}(x,\,C\varepsilon)}
\left(
\int_{B_{\mathcal F}^{\mathrm{loc}}(y,\,C_0\,\varepsilon)}
h_y(z)\,dz
\right)
d\nu_{\mathcal U}(y) \\
&\,\leq\,
\beta_{\mathcal U}\,
\sup_{y\,\in\, X_{\mathcal U}}\,
\operatorname{vol}
\bigl(
B_{\mathcal F}^{\mathrm{loc}}(y,C_0\,\varepsilon)
\bigr)
\,
\nu_{\mathcal U}
\bigl(
B_n^{G_{\mathcal U}}(x,C\varepsilon)
\bigr).
\end{aligned}
$$
Since the atlas is finite, there exists $K>0$ such that, for every sufficiently small $\varepsilon>0$,
$$ 
\sup_{y\,\in\, X_{\mathcal U}}
\,\operatorname{vol}
\bigl(
B_{\mathcal F}^{\mathrm{loc}}(y,C_0\,\varepsilon)
\bigr)
\,\leq\,
K(C_0\,\varepsilon)^{\dim\mathcal F}
$$ 
and so
$$
\mu(B_r(x,\varepsilon))
\,\leq\,
\beta_{\mathcal U}\, K(C_0\,\varepsilon)^{\dim\mathcal F}
\,
\nu_{\mathcal U}
\bigl(
B_n^{G_{\mathcal U}}(x,C\varepsilon)
\bigr).
$$ 
Combined with the fact that 
$ 
\frac{n}{r}\to
\frac1{\operatorname{diam}(\mathcal U)}
$
as
$r$ goes to $+\infty$,  
the latter inequality implies that 
$$
\liminf_{r\,\to\,+\infty}\,
-\frac1r\log\mu(B_r(x,\varepsilon))
\,\geq\,
\frac1{\operatorname{diam}(\mathcal U)}\,
\liminf_{n\,\to\,+\infty}\,
-\frac1n
\log
\nu_{\mathcal U}
\bigl(
B_n^{G_{\mathcal U}}(x,C\varepsilon)
\bigr).
$$
Letting $\varepsilon\to0^+$, we get
$$ 
h_\mu(M,\mathcal F,x)
\,\geq\,
\frac1{\operatorname{diam}(\mathcal U)}
h_{\nu_{\mathcal U}}
(G_{\mathcal U},G_{\mathcal U,1},x)
$$ 
for $\nu_{\mathcal U}$-almost every $x\in X_{\mathcal U}$. Finally, integrating with respect to $\nu_{\mathcal U}$ we obtain the  inequality stated in item (a).

\bigskip

We proceed by proving item $(b)$ of Proposition~\ref{prop:2estimates}. We will apply the next lemma to bound from above the local metric entropy of $\mu$.

\begin{lemma}\label{le:ball-comparison2}
There is a constant $C>0$ such that, given a finite nice foliated atlas $\mathcal U$ and $\zeta>0$, there exists a finite nice foliated atlas $\cU_\zeta$ refining $\cU$ and satisfying
$$\bigcup \Big\{ B_{\cF}(y,\, \vep/4) \colon 
y\in B^G_{\big\lfloor r/[(1-\eta)\diam{\cU_\zeta}] \big\rfloor}(x,C\vep/2)\Big\}
\,\subseteq \,B_r(x,\vep)$$
for every sufficiently small $\varepsilon>0$, 
$0<\eta<1$, $r>0$ and $x\in X_{\mathcal U_\zeta}$.
\end{lemma}

\begin{proof}
The argument follows closely the proofs of \cite[Lemme 3.11]{GLW} and \cite[Theorem 3.4.1]{Walczak}. Firstly, \cite[Theorem 3.4.1]{Walczak} implies that, given $\zeta > 0$ and a finite nice foliated atlas $\cU$, there exists a finite nice foliated atlas $\cU_\zeta$, obtained by refining $\cU$, such that all plaques of $\cU_\zeta$ have the same diameter and 
$$h_\mu(M,\cF,\cU) \, \leq \, h_\mu(M,\cF,\cU_\zeta) + \zeta.$$
Moreover, given $\vep>0$ and $0<\eta \ll \zeta$ sufficiently small, if $r=(1-\eta)n\zeta$ for some $n \in \N$ and $x,y \in X_{\cU_\zeta}$ satisfy $d_M(x,y)<\vep$ and $D_r(x,y)\geq\vep$, then there exists $\underline{g}\in G^{\cU_\zeta}_n$ such that $d(\underline{g}(x),\,\underline{g}(y))>C\vep$, where $C>0$ is a uniform constant depending only on the foliation $\cF$ (cf. \cite[page 81]{Walczak}). That is, 
$$d(g(x),\,g(y))\,\le\, C\,\vep \quad  \forall \,g\in G^{\cU_\zeta}_n \quad  \Rightarrow \quad   D_r(x,y)<\vep.$$
Consequently, 
$$
 B^{G^{\cU_\zeta}}_{\big\lfloor r/[(1-\eta)\diam{\cU_\zeta}] \big\rfloor}\big(x,C\vep/2\big) \,\subseteq \,B_r(x,\vep/2) \cap X_{\cU_\zeta}.$$
Since the leaf diameter of 
$B_{\cF}(y,\vep/4)$ is $\vep/2$ for each $y\in B^G_{\lfloor r/[(1-\eta)\diam{\cU_\zeta}] \rfloor}\big(x,C\vep/2\big)$, the result follows from the triangular inequality.
\end{proof}

For every $x\in X_{\mathcal U_\zeta}$, set
$ 
n_r=
\lfloor
\frac{r}{(1-\eta)\operatorname{diam}(\mathcal U_\zeta)}
\rfloor.
$ 
Using Lemma~~\ref{le:ball-comparison2}, the disintegration formula and the tameness of $\mu$, we conclude that
$$
\mu(B_r(x,\varepsilon))
\,\geq\,
\mu\Big(
\bigcup_{y\,\in\, B_{n_r}^{G^{\mathcal U_\zeta}}(x,C\varepsilon/2)} \,\,
B_{\mathcal F}^{\mathrm{loc}}(y,\varepsilon/4)
\Big)  
\,\geq\,
\alpha_{\mathcal U_\zeta}
K^{-1}
(\varepsilon/4)^{\dim\mathcal F}
\,
\nu_{\mathcal U_\zeta}
\left(
B_{n_r}^{G^{\mathcal U_\zeta}}(x,C\varepsilon/2)
\right).$$
Therefore, as
$ 
\frac{n_r}{r}
\to
\frac1{(1-\eta)\operatorname{diam}(\mathcal U_\zeta)},
$ 
taking $\liminf$ as $r\to+\infty$
we get 
$$
\begin{aligned}
-\frac1r\log\mu(B_r(x,\varepsilon))
&\leq
-\frac1r
\log\left(
\alpha_{\mathcal U_\zeta}
K^{-1}
(\varepsilon/4)^{\dim\mathcal F}
\right) \\
&\quad
+
\frac{n_r}{r}
\left(
-\frac1{n_r}
\log
\nu_{\mathcal U_\zeta}
\left(
B_{n_r}^{G^{\mathcal U_\zeta}}(x,C\varepsilon/2)
\right)
\right).
\end{aligned}
$$
Letting $\varepsilon$ go to $0^+$, we obtain 
$$ 
h_\mu(M,\mathcal F,x)
\,\leq\,
\frac1{(1-\eta)\operatorname{diam}(\mathcal U_\zeta)}\,
h_{\nu_{\mathcal U_\zeta}}
(G^{\mathcal U_\zeta},G^{\mathcal U_\zeta}_1,x).$$ 
Integrating with respect to $\nu_{\mathcal U_\zeta}$, we get
$$
h_\mu(M,\mathcal F,\mathcal U_\zeta)
\,\leq\,
\frac1{(1-\eta)\operatorname{diam}(\mathcal U_\zeta)}\,
h_{\nu_{\mathcal U_\zeta}}
(G^{\mathcal U_\zeta},G^{\mathcal U_\zeta}_1)$$
so, since $\eta>0$ is arbitrary, it follows that
$$
h_\mu(M,\mathcal F,\mathcal U_\zeta)
\,\leq\,
\frac1{\operatorname{diam}(\mathcal U_\zeta)}\,
h_{\nu_{\mathcal U_\zeta}}
(G^{\mathcal U_\zeta},G^{\mathcal U_\zeta}_1).$$
This estimate, together with \eqref{eq:geq} applied to
$\mathcal U_\zeta$, yields
$$
h_\mu(M,\mathcal F,\mathcal U_\zeta)
\,=\,
\frac1{\operatorname{diam}(\mathcal U_\zeta)}
h_{\nu_{\mathcal U_\zeta}}
(G^{\mathcal U_\zeta},G^{\mathcal U_\zeta}_1).
$$
\end{proof}

Given $\zeta > 0$, Proposition~\ref{prop:2estimates} implies that
\begin{equation*}\label{vp:2}
h_\mu(M,\mathcal F)  \,=\, \sup_{\mathcal U}\, h_\mu(M,\mathcal F, \mathcal U) 
\,\leq\,\sup_{\mathcal U}\,\left(\frac{\underline{h}_{\nu_{\mathcal U_\zeta}}(G_{{\mathcal U}_\zeta},G_{{\mathcal U}_\zeta,1})}{\operatorname{diam}({\mathcal U}_\zeta)} + \zeta\right) \,\leq\,\sup_{\mathcal U}\,\Big\{\frac{\underline{h}_{\nu_{\mathcal U}}(G_{\mathcal U},G_{\mathcal U,1})}{\operatorname{diam}(\mathcal U)}\Big\} + \zeta.
\end{equation*}
Since $\zeta$ is arbitrarily small, it follows that
$$h_\mu(M,\mathcal F)  \,\leq\,\sup_{\mathcal U}\,
\Big\{\frac{\underline{h}_{\nu_{\mathcal U}}(G_{\mathcal U},G_{\mathcal U,1})}{\operatorname{diam}(\mathcal U)}\Big\}.$$
This finishes the proof of Theorem~\ref{thm:harmonic-measure-entropy}.


\section{Examples}\label{sec:examples}

The examples in this section emphasize three aspects: the geometric entropy of foliations generated by flows recovers the classical topological entropy for dynamical systems; holonomy invariant measures may fail to detect the full geometric entropy of the foliation; suspended foliations are a source of relevant examples where the relation between leaf geometry and transverse holonomy can be evinced.

\subsection{Suspended foliations}\label{sec:suspended}

In this subsection, we recall the notion of suspended foliation following \cite{Richardson,Walczak}.  
Let $F$ and $B$ be compact smooth connected manifolds. Denote by $\pi_1(B)$ the fundamental group of homotopy of $B$ and by $\tilde B$ the universal cover of $B$. Assume that $\phi\colon  \pi_1(B)\to \text{Diff}^r(F)$ ($r \ge 1$)  is a representation of $\pi_1(B)$, that is, a group homomorphism. The leaves of the \emph{suspended foliation} $\mathcal F$ in the manifold 
$$M = F \times_\phi B = (F \times \tilde B) / \pi_1(B)$$
are immersed $\tilde B$-submanifolds such that $F$ is a global transversal to the foliation, where the action of $\pi_1(B)$ on $F \times \tilde B$ is given by 
$$\gamma \cdot (x,y) = (\phi(\gamma)(x), \gamma(y)) \quad \forall \, \gamma \in \pi_1(B), \,\, \forall \, (x,y) \in F \times \tilde B$$
and $\gamma$ acts as the deck transformation on $\tilde B$. The foliation can be described by
$$\mathcal F = \Big\{\{z\} \times \tilde B \colon z\in F \Big\} \,/\, \pi_1(B)$$
whose leaves are the projections of the sets $\{x\}\times\widetilde B$ to the quotient manifold $M$. 
By construction, the holonomy pseudogroup is a group isomorphic to the subgroup $\phi(\pi_1(B))$ of $\text{Diff}^{\, r}(F)$, and the transverse dynamics of the foliation is encoded by the action of $\phi(\pi_1(B))$ on $F$.
 Equivalently, the leaf $\mathcal F_x$ is naturally identified with
$ 
\widetilde B/\operatorname{Stab}(x),
$ 
where
$
\operatorname{Stab}(x)
=
\{\gamma\in\pi_1(B):\varphi(\gamma)(x)=x\}.
$ 

%

\begin{lemma}\label{le:lift-suspension}
Let $M=F\times_\varphi B$ be a suspended foliation. If $\nu$ is a Borel holonomy invariant transverse probability measure, then $\nu$ is the lifting of a tame harmonic probability measure on $(M,\mathcal F)$ whose leafwise densities are constant.
\end{lemma}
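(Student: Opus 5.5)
The natural approach is to build the measure directly on the cover $F\times\widetilde B$ and push it down to the quotient. Given a holonomy invariant Borel probability $\nu$ on $F$, that is, $\phi(\gamma)_*\nu=\nu$ for every $\gamma\in\pi_1(B)$, consider on $F\times\widetilde B$ the product measure $\tilde\mu=\nu\otimes \mathrm{vol}_{\widetilde B}$. Here $\mathrm{vol}_{\widetilde B}$ is the lift to $\widetilde B$ of the Riemannian volume of $B$, so it is invariant under deck transformations. For each $\gamma$, the map $(x,y)\mapsto(\phi(\gamma)x,\gamma y)$ preserves $\tilde\mu$, since both factors are preserved. Hence $\tilde\mu$ descends to a finite measure $\mu$ on $M$: one restricts $\tilde\mu$ to $F\times D$, where $D$ is a fundamental domain of $\widetilde B$, and pushes it forward. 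Its total mass is $\mathrm{vol}(B)$, and after normalizing we obtain a probability measure $\mu$ on $M$.

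The first step is to check that $\mu$ is harmonic. We equip $M$ with a metric for which the leaves are locally isometric to $B$; this is the standard suspension metric, whose leafwise part is the lifted metric of $B$. Then, for a function $f$ that is $C^2$ along the leaves, $\Delta_{\mathcal F}f$ lifts to $\Delta_{\widetilde B}$ applied in the $\widetilde B$ variable. Choose a partition of unity on $B$ subordinate to evenly covered open sets. Over each such set $V$, the measure $\mu$ is locally $\nu\otimes\mathrm{vol}_V$. Fubini and Green's formula on the compact manifold $B$ then give
\[
\int_M\Delta_{\mathcal F}f\,d\mu=\int_F\Big(\int_B\Delta_B f_x\,d\mathrm{vol}_B\Big)d\nu(x)=0,
\]
where $f_x$ is the restriction of $f$ to the leaf.

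The main obstacle is making this global. The integral is only naturally split over trivializing pieces, so to apply Green's formula on $B$ one must realize the leafwise restriction as a function on closed $B$. The cleanest route is to compute in foliated boxes $V\times T$ over evenly covered open sets $V$, and to use Proposition~\ref{prop:disintegrationharm} instead. In each box the disintegration of $\mu$ has transverse measure $\nu|_T$, pushed through the local identification of $T$ with an open subset of $F$, and the leafwise density is constantly $1/\mathrm{vol}(B)$. Constant functions are $\Delta^*$-harmonic, so the criterion in Proposition~\ref{prop:disintegrationharm} applies box by box. What must be verified is that these local disintegrations are compatible on overlaps. This compatibility holds precisely because of the holonomy invariance of $\nu$: the holonomy change of coordinates between boxes is given by $\phi(\gamma)$, and it preserves $\nu$.

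Tameness is then immediate. In any nice atlas the leafwise densities are constant, since they are equal to $1/\mathrm{vol}(B)$ up to the normalization of the local transversal measures. They are therefore bounded above and below, which is exactly Definition~\ref{def:tame-harmonic}. Finally, by construction the transverse measure induced by $\mu$ on the global transversal $F$ is $\nu$, up to normalization. So $\nu$ is the lifting of $\mu$, as the lemma claims.
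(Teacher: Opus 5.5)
Your proposal is correct and follows the paper's proof: you take the same product measure $\nu\otimes\mathrm{vol}_{\widetilde B}$ on $F\times\widetilde B$, descend it using invariance under the diagonal $\pi_1(B)$-action, read off harmonicity from the constant leafwise densities in foliated boxes via Proposition~\ref{prop:disintegrationharm}, and get tameness because constants are bounded above and below. Two small remarks: your preliminary Green's-formula display is not valid as written, since the leaf through $x$ is $\widetilde B/\operatorname{Stab}(x)$ (in general noncompact), so $f_x$ is not a function on $B$; and the overlap-compatibility check is automatic, because $\mu$ is already a single globally defined measure, so the local criterion applies box by box.
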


\begin{proof}
Let $\nu$ be a holonomy invariant transverse probability measure. On the
covering space $F\times\widetilde B$, consider the product measure
$ 
\nu\times \operatorname{vol}_{\widetilde B}.
$ 
The holonomy invariance of $\nu$, together with the invariance of the
Riemannian volume on $\widetilde B$ under deck transformations, ensures
that this product measure descends to a well-defined finite measure on
the suspension $M=F\times_\varphi B.$ 
After normalization, we obtain a probability measure $\mu_\nu$ on $M$.
In foliated boxes, the conditional measures of $\mu_\nu$ along plaques
are given by the Riemannian volume on the leaves. Equivalently, the
corresponding leafwise densities are constant. Therefore $\mu_\nu$ is a
harmonic measure. Since the densities are constant, they are uniformly
bounded from above and from below on every finite nice foliated atlas;
hence $\mu_\nu$ is tame.
\end{proof}

\begin{remark}
\emph{The holonomy-invariance assumption in Lemma~9.1 is essential. Without
it, the product measure constructed on the covering space need not
descend to the suspension. Therefore the lemma produces tame harmonic
measures from holonomy invariant transverse measures, but it does not
imply that suspended foliations are tame foliated spaces in the sense of
Definition~\ref{def:tamefoliation}.}
\end{remark}

\begin{remark}\label{rmk:realiation}
\emph{Since every finitely presented group can be realized as the fundamental group of a compact, connected, smooth manifold of dimension 4 or higher, 
a consequence of this construction is the following: any finitely generated group of diffeomorphisms of a manifold can be realized as a holonomy (pseudo)group of some foliation. We refer the reader to 
\cite{Richardson} for many other examples of suspended foliations.}
\end{remark}

The class of suspended foliations is extremely useful as it reduces several questions about geometric entropy of foliations to questions about the entropy of finitely generated groups or pseudogroups of diffeomorphisms. If the action of $\rho(\pi_1(B))$ on $F$ has positive entropy, then the corresponding suspended foliation exhibits positive transverse complexity; conversely, when the transverse action has zero entropy, suspended foliations provide natural examples with vanishing transverse complexity. 
\smallskip

In what follows, we discuss an example where the foliation and the holonomy pseudogroup appear in the context of smooth flows which admit a global cross section. In this case, the leaves of the foliation are the orbits of the flow and the transverse dynamics is described by the corresponding Poincaré return map. Thus, the geometric entropy of the foliation is naturally related to the classical topological entropy of the underlying flow. 

\begin{example}
\label{ex:suspensionAnosov}
Let
$ 
f:\mathbb T^d\to\mathbb T^d
$ 
be an Anosov diffeomorphism on the torus $\mathbb T^d$, $d\geq2$, and let
$ R \colon \mathbb T^d\to\mathbb R_+$ 
be a $C^1$ roof function. Consider the suspension manifold
$$M \,=\, \{(x,s)\in\mathbb T^d\times\mathbb R_+ \colon \,  0\leq s\leq R(x)\}/\sim
$$ 
where 
$$(x, R(x)) \sim (f(x),0) \quad \quad \forall\, x\in\mathbb T^d.$$
The suspension flow $(X_t)_{t\,\in\,\mathbb R}$ is given in local coordinates by
$$X_t(x,s)\,= \, \Big(f^n(x),\,t+s-\sum_{j=0}^{n-1}\,R(f^j(x))\Big)$$ 
where $n=n(x,t,s)\geq0$ is uniquely determined by the inequalities
$$\sum_{j=0}^{n-1} \,R(f^j(x))\,\leq\,t+s \,
<\,\sum_{j=0}^{n}\,R(f^j(x)).$$ 

\smallskip
Let $\mathcal F$ be the one-dimensional foliation whose leaves are the orbits of the flow $(X_t)_{t\,\in\,\mathbb R}$. In this case, the geometric entropy of $\mathcal F$ measures the exponential growth rate of distinguishable orbit segments. More precisely, if the vector field generating the flow is parametrized with unit speed, then the leaf ball of radius $r$ centered at $x$ corresponds to the orbit segment
$$\{X_t(x) \colon  -r\leq t\leq r\}$$ 
and the geometric entropy of the foliation is known to be related to the topological entropy of the flow
(cf. \cite[Th\'eor\`eme~3.2]{GLW}). 
\smallskip

Borel probability measures invariant by the flow provide natural examples of harmonic measures for the orbit foliation. Indeed, assume that the vector field $X$ generating the flow has unit norm along the leaves. Then the leaf Laplacian is given by
$ 
\Delta_{\mathcal F}=X^2
$ 
and, if $\mu$ is preserved by the smooth flow $(X_t)_{t\,\in\,\mathbb R}$, then
$ 
\int Xg\,d\mu=0
$ 
for every sufficiently regular function $g$. Taking $g=Xf$, we get
$ 
\int \Delta_{\mathcal F}f\,d\mu
=
\int X^2f\,d\mu
=
0.
$ 
Thus, $\mu$ is harmonic. If, in addition, the conditional densities of $\mu$ in local flow
boxes are uniformly bounded from above and from below, then $\mu$ belongs to the tame class considered above.
\end{example}

\subsection{Holonomy invariant measures that carry no relevant entropy}

The next example illustrates the fact that, even if holonomy invariant measures exist, they may be insufficient to describe the entropy of a foliation.

\begin{example}\label{ex:holonomy-invariant-zero-entropy}
Consider the two-sphere $\mathbb S^2$ and a pair of diffeomorphisms 
$g_1,g_2\colon \mathbb S^2\to\mathbb S^2$ 
such that $g_1$ is an Axiom A $C^1$-diffeomorphism satisfying $h_{\mathrm{top}}(g_1)>0$, the map $g_2$ is a Morse--Smale $C^1$-diffeomorphism such that
$\operatorname{Fix}(g_2)=\operatorname{Per}(g_2)=\{p,q\}$, and the two points $p$ and $q$ are fixed by $g_1$. Let $G\subset \text{Diff}^1(\mathbb S^2)$ be the group generated by
$G_1=\{Id, g_1,g_2,g_1^{-1},g_2^{-1}\}.$ 
Since $g_1$ has positive topological entropy, then
$$h_{\mathrm{CP}}(G,G_1)
\,\geq\,
h_{\mathrm{CP}}(g_1) 
\,=\, h_{\mathrm{top}}(g_1)
\,>\,0.$$
On the other hand, the space of $G$-invariant probability measures is (cf. \cite{BCV}) 
$$
\mathcal P_G(\mathbb S^2)
=
\left\{
t\delta_p+(1-t)\delta_q \colon \, t\in[0,1]
\right\}.
$$
Consequently, all $G$-invariant probability measures are supported on fixed points and have zero metric entropy. In particular,
$$
\sup_{\mu\,\in\, \mathcal P_G(\mathbb S^2)}\,
\underline h_\mu(G,G_1)
=0
$$
whereas
\begin{equation}\label{eq:entropcpex}
h_{\mathrm{CP}}(G,G_1)>0.
\end{equation}
This example shows that holonomy invariant measures may not detect all entropy phenomena along the foliation.
\end{example}

\begin{example}\label{ex:zero-transverse-entropy}
This example is built upon the previous one. Let $B$ be a compact manifold and 
$ 
\phi\colon \pi_1(B)\to G
$ 
be a representation whose image is the group $G$  (recall Remark~\ref{rmk:realiation}). Consider the suspended foliation $\mathcal F$ on
$M = \mathbb S^2\times_\phi \widetilde B.$ 
By construction, the leaves $\mathcal F_p$ and $\mathcal F_q$ through $p$ and $q$, respectively, are compact. Moreover, 
the harmonic measures induced by holonomy invariant transverse probability measures (cf. Lemma~\ref{le:lift-suspension}) are supported on the union of these two compact leaves and carry zero transversal entropy.
However, by combining ~\eqref{eq:entropcpex} with Theorem~\ref{thm:entropy-holonomy} we conclude that 
$$\hgeo(M,\mathcal F) \,>\,0.$$
This shows that a positive transverse entropy may coexist with a very small space of holonomy invariant measures. 

\end{example}

\subsection*{Acknowledgments}
A. Bi\'s was partially supported by a grant from the University of $\L$odz with reference 11/IDUB/DOS/2021. M. Carvalho was partially supported by CMUP, member of LASI, which is financed by national funds through FCT–Fundação para a Ciência e a Tecnologia, I.P., under the project UID/00144/2025 (https://doi.org/10.54499/UID/00144/2025). P. Varandas was partially supported by CIDMA under the Portuguese Foundation for Science and Technology (FCT, https://ror.org/00snfqn58) Multi-Annual Financing Program for R\&D Units, grants UID/4106/2025 and UID/PRR/4106/2025.

\end{document}